\DeclareMathOperator{\GL}{GL} 
\DeclareMathOperator{\SL}{SL} 
 \DeclareMathOperator{\PSp}{PSp}
\DeclareMathOperator{\Sp}{Sp}     \DeclareMathOperator{\Sym}{Sym}
 \DeclareMathOperator{\bd}{bd}
\DeclareMathOperator{\diag}{diag} 
 \DeclareMathOperator{\Sl}{Sl}
\newcommand{\F}{\mathbb{F}}       
\newtheorem{lem}{{\bfseries Лемма}}
\newtheorem{theorem}{{\bfseries Теорема}}
\newtheorem{remark}{{\bfseries Замечание}}
\newtheorem{cor}{{\bfseries Следствие}}
\newtheorem{prop}{{\bfseries Предложение}}
\newtheorem{probl}{{\bfseries Проблема}}
\title{\vspace{-1cm} \hfill{\normalsize УДК 512.54}{
\fontfamily{cmr} \fontseries{bx} \selectfont \\ \vspace{1cm} О РАСЩЕПЛЯЕМОСТИ НОРМАЛИЗАТОРА ТОРА В
СИМПЛЕКТИЧЕСКИХ ГРУППАХ}
\thanks{Работа выполнена при финансовой поддержке Научного фонда китайских университетов (номер проекта WK0010000029).}
\date{}
\author{\bf  А.А. Гальт}}
\begin{document}

\sloppy

\maketitle
\pagenumbering{arabic}

\begin{quote}
\noindent{\bf Аннотация.}
В данной работе решен вопрос о расщепляемости нормализатора тора в симплектических группах над конечными полями
и над алгебраически замкнутыми полями положительной характеристики.
\medskip

\noindent{\bf Ключевые слова:} Максимальный тор, нормализатор тора, симплектическая группа, группа Вейля.
 \end{quote}

\section*{Введение}

Конечные группы лиева типа составляют основной массив конечных простых групп. Они возникают из линейных алгебраических групп как множество неподвижных точек эндоморфизма Стейнберга. Пусть $\overline{G}$~--- простая связная линейная алгебраическая группа над алгебраическим замыканием $\overline{\F}_p$ конечного поля положительной характеристики $p$, $\sigma$~--- эндоморфизм Стейнберга, $\overline{T}$~--- максимальный $\sigma$-инвариантный тор в группе  $\overline{G}$. Хорошо известно, что все максимальные торы сопряжены в $\overline{G}$ и факторгруппа $N_{\overline{G}}(\overline{T})/\overline{T}$ изоморфна группе Вейля $W$ группы $\overline{G}$. Возникает естественный вопрос:

\begin{probl}\label{prob1}
Описать группы $\overline{G}$, в которых $N_{\overline{G}}(\overline{T})$ расщепляется над $\overline{T}$.
\end{probl}

\noindent При переходе к конечным группам лиева типа $G$ возникает аналогичный вопрос. Пусть $T=\overline{T}\cap G$~--- максимальный тор группы $G$, $N=N_{\overline{G}}(\overline{T})\cap G$~--- алгебраический нормализатор. Отметим, что $N\leqslant N_G(T)$, но равенство, вообще говоря, может нарушаться.

\begin{probl}\label{prob2}
Описать группы $G$ и их максимальные торы $T$, в которых $N$ расщепляется над~$T$.
\end{probl}

\noindent В данной работе рассматриваются простые связные линейные алгебраические группы $\overline{G}$ типа $C_n$. В этом случае группа $\overline{G}$ либо односвязна и $\overline{G}\simeq\Sp_{2n}(\overline{\F}_p)$, либо имеет присоединенный тип и $\overline{G}\simeq\PSp_{2n}(\overline{\F}_p)$. Ответ на проблему~\ref{prob1} дают следующие два утверждения:

\begin{theorem}\label{th1}
Пусть $\overline{T}$~--- максимальный $\sigma$-инвариантный тор в группе $\overline{G}=\PSp_{2n}(\overline{\F}_p)$ и $\overline{N}=N_{\overline{G}}(\overline{T})$. Тогда тор $\overline{T}$ имеет дополнение в $\overline{N}$ в том и только в том случае, если выполняется одно из следующих условий:
\begin{itemize}
  \item[{\em (1)}] $p=2;$
  \item[{\em (2)}] $n\leqslant2$.
\end{itemize}
\end{theorem}

\begin{cor}\label{cor1.5}
Пусть $\overline{T}$~--- максимальный $\sigma$-инвариантный тор в группе $\overline{G}=\Sp_{2n}(\overline{\F}_p)$ и $\overline{N}=N_{\overline{G}}(\overline{T})$. Тогда тор $\overline{T}$ имеет дополнение в $\overline{N}$ в том и только в том случае, если $p=2$.
\end{cor}

При переходе к конечным группам лиева типа $G$ существует взаимно-однозначное соответствие между классами $G$-сопряженных максимальных $\sigma$-инвариантных торов группы $\overline{G}$ и классами $\sigma$-сопряженности группы Вейля $W$. В случае симплектических групп классы $\sigma$-сопряженности группы $W$ совпадают с обычными классами сопряженности и каждому такому классу соответствует циклический тип $(\overline{n_1})\ldots(\overline{n_k})(n_{k+1})\ldots(n_m)$. Ответ на проблему~\ref{prob2} в случае симплектических групп содержится в следующих двух утверждениях:

\begin{theorem}\label{th2}
Пусть $q$~--- степень числа $p$, $\overline{T}$~--- максимальный $\sigma$-инвариантный тор в группе $\overline{G}=\PSp_{2n}(\overline{\F}_p)$ и
$T=\overline{T}\cap G$~--- соответствующий максимальный тор в группе $G=\PSp_{2n}(q)$, имеющий тип $(\overline{n_1})\ldots(\overline{n_k})(n_{k+1})\ldots(n_m)$. Тогда $T$ имеет дополнение в $N=N_{\overline{G}}(\overline{T})\cap G$ в том и только в том случае, если выполняется одно из следующих условий:
\begin{itemize}
  \item[{\em (1)}] $p=2;$
  \item[{\em (2)}] $m=1;$
  \item[{\em (3)}] $m=2, k=2,$ числа $n_1, n_2$~--- нечетные, $q\equiv3\pmod4;$
  \item[{\em (4)}] $m=2, k=1,$ $n_1$~--- нечетно, $n_2$~--- четно, $q\equiv3\pmod4;$
  \item[{\em (5)}] $m=2, k=0,$ числа $n_1, n_2$~--- четные, $q\equiv3\pmod4;$
  \item[{\em (6)}] $m=2, k=0,$ $q\equiv1\pmod4$.
\end{itemize}
\end{theorem}

\begin{cor}\label{cor2.5}
Пусть $q$~--- степень числа $p$, $\overline{T}$~--- максимальный $\sigma$-инвариантный тор в группе $\overline{G}=\Sp_{2n}(\overline{\F}_p)$ и
$T=\overline{T}\cap G$~--- соответствующий максимальный тор в группе $G=\Sp_{2n}(q)$, имеющий тип $(\overline{n_1})\ldots(\overline{n_k})(n_{k+1})\ldots(n_m)$. Тогда $T$ имеет дополнение в $N=~\!N_{\overline{G}}(\overline{T})\cap G$ в том и только в том случае, если $p=2$.
\end{cor}

Отметим, что ситуация в классических группах, отличных от симплектических, представляется значительно иной. В дальнейшем, планируется исследовать проблемы~\ref{prob1},\ref{prob2} для других классических групп.

\section{Обозначения и используемые результаты}

В работе используются следующие обозначения. Через
\noindent $\{n_1, n_2,\ldots, n_m\}$ всегда обозначается разбиение числа $n$, $p$~--- простое число, $q$~--- некоторая степень числа $p$. Группа всех подстановок на $n$ элементах обозначается $\Sym_n$. Символами  $\varepsilon$, $\varepsilon_i$ всегда обозначены элементы множества $\{+,-\}$.Через $\diag(\lambda_1,\lambda_2,\ldots,\lambda_n)$ обозначается диагональная матрица с элементами $\lambda_1,\lambda_2,\ldots,\lambda_n$ на диагонали. Через $\bd(T_1,T_2,\ldots,T_n)$ обозначается блочно-диагональная матрица с квадратными блоками $T_1,T_2,\ldots,T_n$. Для групп $T$ и $H$ выражение $T\rtimes H$ будет обозначать полупрямое произведение групп $T$ и $H$ c нормальной подгруппой $T$. Пусть $T$ и $N$ подгруппы в группе $G$, $T\trianglelefteq N$. Будем говорить, что {\em $N$ расщепляема над $T$} или {\em $T$ имеет дополнение в $N$}, если существует подгруппа $H$ в $G$, такая что $N=T\rtimes H$.

Основные сведения о линейных алгебраических группах можно найти в~\cite{Hum}. Пусть $\overline{G}$~--- простая связная алгебраическая группа над алгебраическим замыканием $\overline{\F}_p$ конечного поля $\F_p$.
Сюръективный эндоморфизм $\sigma$ группы $\overline{G}$ называется {\em эндоморфизмом Стейнберга} (см. \cite[определение~1.15.1]{GorLySol}), если множество его неподвижных точек $\overline{G}_\sigma$ конечно. Любую группу, удовлетворяющую условию $O^{p'}(\overline{G}_\sigma)\leqslant G\leqslant\overline{G}_\sigma$, будем называть {\em конечной группой лиева типа}. Если $\overline{T}$~--- $\sigma$-инвариантный максимальный тор группы $\overline{G}$, то $T=\overline{T}\cap G$ называется {\em максимальным тором} группы лиева типа $G$. Группу $N_{\overline{G}}(\overline{T})\cap G$ будем обозначать через $N(T,G)$ или просто $N$. Отметим, что справедливо включение $N(T,G)\leqslant N_G(T)$, но равенство, вообще говоря, может нарушаться. Например, если мы рассмотрим $G=\SL_n(2)$, то подгруппа диагональных матриц $T$ группы $G$ тривиальна, значит, $N_G(T)=G$. Но $G=(\SL_n(\overline{\F}_2))_\sigma$, где $\sigma$~--- эндоморфизм Стейнберга $\sigma : (a_{i,j}) \mapsto (a_{i,j}^2)$. Тогда $T=\overline{T}_\sigma$, где $\overline{T}$ является подгруппой диагональных матриц в $\SL_n(\overline{\F}_2)$.
Таким образом, $N(G,T)$ является группой мономиальных матриц в $G$. Поэтому для группы $N(G,T)$ будем использовать термин {\em алгебраический нормализатор}. Именно об алгебраическом нормализаторе говорится в теореме~\ref{th2} и следствии~2. Если $H\unlhd G$, то образ группы $T$ в $G/H$ называется {\em максимальным тором группы} $G/H$.

В $\overline{G}$ всегда есть $\sigma$-инвариантный максимальный тор, который будем обозначать через $\overline{T}$. Все максимальные торы сопряжены c $\overline{T}$ в $\overline{G}$. Через $\overline{N}$ и $W$ будем обозначать нормализатор $N_{\overline{G}}(\overline{T})$ и группу Вейля $\overline{N}/\overline{T}$ соответственно, а через $\pi$~--- естественный гомоморфизм из $\overline{N}$ в $W$. Действие $\sigma$ на $W$ определяется естественным образом. Элементы $w_1, w_2$ называются $\sigma$-сопряженными, если $w_1=(w^{-1})^{\sigma}w_2w$ для некоторого элемента $w$ из $W$.

\begin{prop}{\em\cite[Предложение 3.3.1 и 3.3.3]{Car}}\label{torus}.
Тор $\overline{T}^g$ является $\sigma$-инвариантным тогда и только тогда, когда $g^{\sigma}g^{-1}\in\overline{N}$. Отображение $\overline{T}^g\mapsto\pi(g^{\sigma}g^{-1})$ задает биекцию между классами $G$-сопряженных $\sigma$-инвариантных торов группы $\overline{G}$ и классами $\sigma$-сопряженности группы $W$.
\end{prop}

Как следует, из предложения~\ref{torus} строение тора $(\overline{T}^g)_{\sigma}$ группы $G$ определяется только классом $\sigma$-сопряженности элемента $\pi(g^{\sigma}g^{-1})$. Будем говорить, что тор $T$ группы $G$ {\em имеет тип $w$}, если $T=(\overline{T}^g)_\sigma$ для  некоторого $g\in\overline{G}$ такого, что $\pi(g^{\sigma}g^{-1})=w$.

\begin{prop}{\em\cite[Лемма 1.2]{ButGre}}\label{prop3}.
Пусть $g^{\sigma}g^{-1}\in\overline{N}$ и $\pi(g^{\sigma}g^{-1})=w$. Тогда $(\overline{T}^g)_\sigma=(\overline{T}_{\sigma w})^g$, где $w$ действует на $\overline{T}$ сопряжением.
\end{prop}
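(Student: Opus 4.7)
The assertion is the standard ``twist'' identification from Deligne--Lusztig theory of finite reductive groups, and I would prove it by unwinding both sides as conditions on an element $t \in \overline{T}$ and observing that they coincide.

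\emph{Step 1: unwind the left-hand side.} Every element of $\overline{T}^g$ has the form $g^{-1} t g$ for a unique $t \in \overline{T}$. Set $n := g^\sigma g^{-1} \in \overline{N}$, so that $g^\sigma = n g$. Using that $\sigma$ is an endomorphism of $\overline{G}$,
\begin{equation*}
(g^{-1} t g)^\sigma \;=\; (g^\sigma)^{-1}\, t^\sigma\, g^\sigma \;=\; g^{-1}\, n^{-1}\, t^\sigma\, n\, g.
\end{equation*}
Therefore $g^{-1} t g \in (\overline{T}^g)_\sigma$ if and only if $t^\sigma = n t n^{-1}$.

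\emph{Step 2: recognise this as the $(\sigma w)$-fixed condition on $\overline{T}$.} Because $n$ normalizes $\overline{T}$ and $\overline{T}$ is abelian, conjugation by $n$ restricts to an automorphism of $\overline{T}$ that depends only on $w = \pi(n) \in W$. Writing this action as $t \mapsto {}^{w}t = n t n^{-1}$, the condition $t^\sigma = n t n^{-1}$ is precisely the defining equation for the twisted fixed-point subgroup $\overline{T}_{\sigma w}$. Conjugating back by $g$ then yields $(\overline{T}^g)_\sigma = g^{-1} \overline{T}_{\sigma w} g = (\overline{T}_{\sigma w})^g$, which is the claim.

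The only genuinely delicate point is bookkeeping of conventions: one must fix whether $\sigma w$ is read as ``first $w$, then $\sigma$'' or the reverse, and whether the $W$-action on $\overline{T}$ sends $t$ to $n t n^{-1}$ or to $n^{-1} t n$, so that Step~1 lands in $\overline{T}_{\sigma w}$ rather than in $\overline{T}_{\sigma w^{-1}}$. Once this is synchronised with the convention implicit in Proposition~\ref{torus}, the argument is complete; and because the statement is cited verbatim from \cite{ButGre}, no further work is needed in the present paper beyond this citation.
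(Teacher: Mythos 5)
Your computation is correct and is the standard direct verification: the paper itself offers no proof of this statement, citing it verbatim from \cite[Lemma~1.2]{ButGre}, so there is nothing to diverge from. Your Step 1 identity $(g^{-1}tg)^{\sigma}=g^{-1}n^{-1}t^{\sigma}ng$ with $n=g^{\sigma}g^{-1}$ reduces the $\sigma$-fixedness of $g^{-1}tg$ to $n^{-1}t^{\sigma}n=t$, which is exactly membership in $\overline{T}_{\sigma w}$ under the convention used throughout the paper (apply $\sigma$, then conjugate by $w$); that this is indeed the intended convention can be cross-checked against the explicit description of $\overline{T}_{\sigma w}$ in Proposition~\ref{prop} and against $C_{W,\sigma}(w)$ in Proposition~\ref{normalizer}, so the bookkeeping caveat you flag is resolved in your favour and the argument is complete.
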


\begin{prop}{\em\cite[Предложение 3.3.6]{Car}}\label{normalizer}.
Пусть $g^{\sigma}g^{-1}\in\overline{N}$ и $\pi(g^{\sigma}g^{-1})=w$. Тогда $$(N_{\overline{G}}({\overline{T}}^g))_{\sigma}/({\overline{T}}^g)_{\sigma}\simeq C_{W,\sigma}(w)=\{x\in W| (x^{-1})^{\sigma}wx=w\}.$$
\end{prop}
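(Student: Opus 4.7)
The plan is to move the whole picture from $\overline{T}^g$ back to $\overline{T}$ by conjugation and then project to $W$. With the convention $x^g=g^{-1}xg$, one has $N_{\overline{G}}(\overline{T}^g)=g^{-1}\overline{N}g$, so conjugation by $g$ gives an isomorphism $\overline{N}\to N_{\overline{G}}(\overline{T}^g)$ matching $\overline{T}$ with $\overline{T}^g$. Writing $h=g^\sigma g^{-1}\in\overline{N}$, a direct computation shows that, under this isomorphism, the action of $\sigma$ on the right-hand side corresponds to the twisted endomorphism $F(n)=h^{-1}n^\sigma h$ on $\overline{N}$; in particular, $(N_{\overline{G}}(\overline{T}^g))_\sigma$ corresponds to $\overline{N}^F$ and $(\overline{T}^g)_\sigma$ to $\overline{T}^F$.

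Next I would pass to the Weyl group. Since $h$ normalizes $\overline{T}$ and $\sigma(\overline{T})=\overline{T}$, the torus $\overline{T}$ is $F$-stable and the sequence $1\to\overline{T}\to\overline{N}\xrightarrow{\pi}W\to1$ is $F$-equivariant. The endomorphism of $W$ induced by $F$ is $x\mapsto w^{-1}x^\sigma w$ because $\pi(h)=w$, and its fixed set is exactly $C_{W,\sigma}(w)$. Applying $\pi$ to $\overline{N}^F$ therefore gives a canonical injective homomorphism $\overline{N}^F/\overline{T}^F\to C_{W,\sigma}(w)$, whose source, after transport by $c_{g^{-1}}$, is $(N_{\overline{G}}(\overline{T}^g))_\sigma/(\overline{T}^g)_\sigma$.

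The core of the argument is surjectivity. Given $x\in C_{W,\sigma}(w)$, pick any lift $\dot x\in\overline{N}$; then $\pi(F(\dot x))=x=\pi(\dot x)$, so $F(\dot x)=\dot x\,t$ for some $t\in\overline{T}$. I would then look for $s\in\overline{T}$ making $\dot x s$ an $F$-fixed element. Since $\overline{T}$ is abelian and $h$ normalizes it, the identity $F(\dot x s)=F(\dot x)\cdot F|_{\overline{T}}(s)$ holds, and the fixed-point condition collapses to the Lang-type equation $s\cdot F|_{\overline{T}}(s)^{-1}=t$ in $\overline{T}$.

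The hard part will be to check that $F|_{\overline{T}}\colon t\mapsto h^{-1}t^\sigma h$ is genuinely a Steinberg endomorphism of $\overline{T}$, so that Lang--Steinberg delivers the required $s$. This follows from the fact that $h$ acts on $\overline{T}$ through its finite-order class $w\in W$, so a suitable power of $F|_{\overline{T}}$ coincides with a power of $\sigma|_{\overline{T}}$ up to an inner twist, and the fixed points of the latter embed into the finite group $\overline{G}_\sigma$. Once this is in place, combining the three steps above yields the claimed isomorphism.
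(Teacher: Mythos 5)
The paper does not prove this statement at all: it is quoted verbatim from Carter \cite[Proposition 3.3.6]{Car}, so there is no in-paper argument to compare against. Your proposal is correct and is essentially the standard proof of that proposition: transport by $c_{g^{-1}}$ to replace $\sigma$ on $N_{\overline{G}}(\overline{T}^g)$ by the twisted map $F(n)=h^{-1}n^{\sigma}h$ on $\overline{N}$, observe that $\pi$ is $F$-equivariant with induced map $x\mapsto w^{-1}x^{\sigma}w$ on $W$, and obtain surjectivity of $\overline{N}^F/\overline{T}^F\to C_{W,\sigma}(w)$ by solving the Lang equation $sF(s)^{-1}=t$ in the connected group $\overline{T}$. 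The only point you flag as ``hard'' --- that $F|_{\overline{T}}$ is a Steinberg endomorphism --- is standard and can be dispatched more directly than by your power argument: by Lang's theorem $h=b\,\sigma(b)^{-1}$ for some $b\in\overline{G}$, so $n\mapsto h^{-1}n^{\sigma}h$ is conjugate (by $\mathrm{int}(b)$) to $\sigma$ itself, hence has finite fixed-point group on $\overline{G}$ and a fortiori on the $F$-stable subgroup $\overline{T}$; your norm-of-$w$ argument also works. No gaps.
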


\section{Симплектические группы}

В данном разделе рассматриваются симплектические группы $\Sp_{2n}(\overline{\F}_p)$, ассоциированные с формой $x_1y_{-1}-x_{-1}y_1+\ldots+x_ny_{-n}-x_{-n}y_n$. Через $\PSp_{2n}(\overline{\F}_p)$ обозначается факторгруппа группы $\Sp_{2n}(\overline{\F}_p)$ по ее центру. Пусть $\overline{G}$~--- простая связная линейная алгебраическая группа типа $C_n$. В этом случае группа $\overline{G}$ либо односвязна и мы будем обозначать ее через $\overline{G}_{sc}$, либо имеет присоединенный тип и будет обозначаться через $\overline{G}_{ad}$. При этом $\overline{G}_{sc}\simeq\Sp_{2n}(\overline{\F}_p)$ и $\overline{G}_{ad}\simeq\PSp_{2n}(\overline{\F}_p)$. Строки и столбцы симплектических матриц размерности $2n$ нумеруются в порядке $1,2,\ldots,n,-1,-2,\ldots,-n$. В качестве максимального $\sigma$-инвариантного тора $\overline{T}_{sc}$  в $\overline{G}_{sc}$ возьмем группу всех матриц вида $\bd(D,D^{-1})$, где $D$~--- невырожденная диагональная матрица размера $n\times n$. Через $\overline{N}_{sc}$ будем обозначать нормализатор тора $\overline{T}_{sc}$ в $\overline{G}_{sc}$, а через $\overline{T}_{ad}$ и $\overline{N}_{ad}$ будем обозначать образ группы $\overline{T}_{sc}$ и $\overline{N}_{sc}$ в $\overline{G}_{ad}$ соответственно.

Действие группы Вейля $W$ на $\overline{T}_{sc}$ реализуется перестановкой элементов на диагонали. Группа $\overline{N}_{sc}$ является подгруппой мономиальных матриц и существует вложение группы $W$ в группу подстановок на множестве $\{1,2,\ldots,n,-1,-2,\ldots,-n\}$. Образ группы $W$ при этом вложении совпадает с группой $\Sl_n$ тех подстановок $\varphi$, для которых справедливо равенство $\varphi(-i)=-\varphi(i)$. Определим следующие элементы из $\Sl_n$:
$$\varphi_1=(1,2)(-1,-2), \varphi_2=(2,3)(-2,-3),\ldots, \varphi_{n-1}=(n-1,n)(-(n-1),-n), \tau=(n,-n).$$
Тогда $\Sl_n=\langle \varphi_1,\varphi_2,\ldots,\varphi_{n-1},\tau \rangle$. Элементы $\varphi_1,\varphi_2,\ldots,\varphi_{n-1},\tau$ соответствуют графу Кокстера типа $C_n$:

\begin{picture}(30,50)(-60,-20)
\put(50,0){\circle*{6}}
\put(50,0){\line(1,0){50}}
\put(100,0){\circle*{6}}
\put(100,0){\line(1,0){25}}
\put(147.5,0){\circle*{1}} \put(150,0){\circle*{1}}
\put(152.5,0){\circle*{1}} \put(160,0){\circle*{1}}
\put(157.5,0){\circle*{1}} \put(165,0){\circle*{1}}
\put(162.5,0){\circle*{1}} \put(170,0){\circle*{1}}
\put(167.5,0){\circle*{1}} \put(145,0){\circle*{1}}
\put(172.5,0){\circle*{1}} \put(130,0){\circle*{1}}
\put(127.5,0){\circle*{1}} \put(135,0){\circle*{1}}
\put(132.5,0){\circle*{1}} \put(140,0){\circle*{1}}
\put(137.5,0){\circle*{1}} \put(147.5,0){\circle*{1}}
\put(142.5,0){\circle*{1}} \put(155,0){\circle*{1}}
\put(175,0){\line(1,0){25}}
\put(200,0){\circle*{6}}
\put(250,0){\circle*{6}}
\put(200,0){\line(1,0){50}}
\put(50,10){\makebox(0,0){$\varphi_1$}}
\put(100,10){\makebox(0,0){$\varphi_2$}}
\put(250,10){\makebox(0,0){$\tau$}}
\put(200,10){\makebox(0,0){$\varphi_{n-1}$}}
\put(225,5){\makebox(0,0){$4$}}
\end{picture}\\

\noindent В частности, элемент $\tau\varphi_{n-1}$ имеет порядок 4 и $\tau\varphi_i=\varphi_i\tau$ для всех $i\in\{1,\ldots, n-2\}$. Для порождающих элементов группы $\Sl_{n}$ выберем соответствующих представителей из $\overline{N}_{sc}$. Пусть $I_n$~--- единичная матрица размера $n$,\[Q=\left(  \begin{array}{cc}
                0 & I_n \\
                -I_n & 0 \\
           \end{array} \right),\]
тогда $\Sp_{2n}(\overline{\F}_p)=\{A \in \GL_{2n}(\overline{\F}_p) | A^{tr}QA=Q\}$. Поскольку симметрическая группа $\Sym_{2n}$ каноническим образом изоморфна группе подстановочных матриц размерности $2n$, мы будем отождествлять элементы этих групп. Тогда элементы $\varphi_1,\varphi_2,\ldots,\varphi_{n-1}$ лежат в $\overline{N}_{sc}$, а в качестве представителя смежного класса, соответствующего элементу $\tau$, возьмём следующий:

\[ \tau_0=\left(  \begin{array}{c|c|c|c}
                I_{n-1} & 0 & 0 & 0 \\ \hline
                0 & 0 & 0 & 1 \\ \hline
                0 & 0 & I_{n-1} & 0 \\ \hline
                0 & -1 & 0 & 0
           \end{array}
\right).\]
\section{Доказательство теоремы~\ref{th1}}

\begin{remark}\label{rem0}
В случае четной характеристики поля $\overline{\F}_p$ выполняется равенство $\tau=\tau_0$ и группа $\overline{H}_{sc}=\langle \varphi_1,\varphi_2,\ldots,\varphi_{n-1},\tau_0 \rangle$ является дополнением для тора $\overline{T}_{sc}$ в $\overline{N}_{sc}$. Поскольку центр $\Sp_{2n}(\overline{\F}_p)$ тривиален, то $\Sp_{2n}(\overline{\F}_p)=\PSp_{2n}(\overline{\F}_p)$ и группа $\overline{H}_{sc}$ также является дополнением для $\overline{T}_{ad}$ в $\overline{N}_{ad}$.
\end{remark}

\begin{lem}\label{lem-1}
Пусть $\overline{T}_{sc}$~--- максимальный $\sigma$-инвариантный тор в группе $\overline{G}_{sc}=\Sp_{2n}(\overline{\F}_p)$. Если $n\in\{1,2\}$, то тор $\overline{T}_{ad}$ имеет дополнение в $\overline{N}_{ad}$.
\end{lem}
\begin{proof}
Если $n=1$, то положим $\overline{H}_{sc}=\langle\tau_0\rangle$, $\overline{H}_{ad}$~--- образ группы $\overline{H}_{sc}$ в $\PSp_{2}(\overline{\F}_p)$. Тогда $\tau_0^2=-I_2$ лежит в центре $\Sp_{2}(\overline{\F}_p)$ и $\overline{H}_{ad}$ является дополнением для $\overline{T}_{ad}$ в $\overline{N}_{ad}$. В случае $n=2$ группа $W=\langle \varphi_1,\tau\rangle, \varphi_1^2=\tau^2=(\varphi_1\tau)^4=e$. Положим $\overline{H}_{sc}=\langle s_1,t\rangle$, где
\[ s_1=\left(  \begin{array}{cccc}
                0 & 1 & 0 & 0\\
                1 & 0 & 0 & 0 \\
                0 & 0 & 0 & 1 \\
                0 & 0 & 1 & 0 \\
           \end{array} \right)\!,\quad
t=\left(  \begin{array}{cccc}
                \alpha & 0 & 0 & 0\\
                0 & 0 & 0 & 1 \\
                0 & 0 & \alpha^{-1} & 0 \\
                0 & -1 & 0 & 0 \\
           \end{array} \right)\]
и $\alpha\in\overline{\F}_p, \alpha^2=-1$. Тогда $s_1,t\in\Sp_{4}(\overline{\F}_p), s_1^2=I, t^2=(s_1t)^4=-I$. Пусть $\overline{H}_{ad}$~--- образ группы $\overline{H}_{sc}$ в $\PSp_{4}(\overline{\F}_p)$, тогда $\overline{H}_{ad}$ является дополнением для $\overline{T}_{ad}$ в $\overline{N}_{ad}$.
\end{proof}

\begin{lem}\label{lem0}
Пусть $\overline{T}_{sc}$~--- максимальный $\sigma$-инвариантный тор в группе $\overline{G}_{sc}=\Sp_{2n}(\overline{\F}_p)$. Тогда
\begin{itemize}
  \item[{\em (1)}] Если характеристика поля $\overline{\F}_p$ нечетна, то тор $\overline{T}_{sc}$ не имеет дополнения в $\overline{G}_{sc};$
  \item[{\em (2)}] Если $n\geqslant 3$ и характеристика поля $\overline{\F}_p$ нечетна, то тор $\overline{T}_{ad}$ не имеет дополнения в $\overline{N}_{ad}$.
\end{itemize}
\end{lem}
\begin{proof} (1) Предположим противное. Пусть $\overline{H}_{sc}$~--- дополнение для $\overline{T}_{sc}$ в $\overline{N}_{sc}$ и $t$~--- прообраз элемента $\tau$ в $\overline{H}_{sc}$. Тогда $t^2=I$. С другой стороны, элемент $t$ имеет вид
\begin{center}
$t=\diag(\nu_1,\ldots,\nu_{n-1},\nu_n,\nu_1^{-1},\ldots,\nu_{n-1}^{-1},-\nu_n^{-1})\tau_0$,
\end{center}

\noindent для некоторых ненулевых диагональных элементов $\nu_i$. Следовательно,
\[\begin{array}{rcl}
t^2\negthickspace & =\negthickspace & \diag(\nu_1^2,\ldots,\nu_{n-1}^2,-1,\nu_1^{-2},\ldots,\nu_{n-1}^{-2},-1).
\end{array}\]
Противоречие с тем, что $t^2$ единичная матрица. \\
(2) Предположим противное. Пусть $\overline{H}_{ad}$~--- дополнение для $\overline{T}_{ad}$ в $\overline{N}_{ad}$ и $\overline{s}_{n-1},\overline{t}$~--- прообразы элементов $\varphi_{n-1},\tau$ в $\overline{H}_{ad}$. Тогда $(\overline{s}_{n-1})^2=\overline{t}^2=\overline{I}$ и $(\overline{s}_{n-1}\overline{t})^4=\overline{I}$, где $\overline{I}$~--- единичный элемент в $\PSp_{2n}(\overline{\F}_p)$. Пусть $\overline{H}_{sc}$~--- прообраз $\overline{H}_{ad}$ в $\overline{N}_{sc}$ и $s_{n-1},t$~--- прообразы элементов $\overline{s}_{n-1},\overline{t}$ в $\overline{H}_{sc}$. Тогда

\begin{center}
$s_{n-1}=\diag(\mu_1,\mu_2,\ldots,\mu_n,\mu_1^{-1},\mu_2^{-1},\ldots,\mu_n^{-1})\varphi_{n-1}$,

$t=\diag(\nu_1,\nu_2,\ldots,\nu_n,\nu_1^{-1},\nu_2^{-1},\ldots,-\nu_n^{-1})\tau_0$,
\end{center}

\noindent для некоторых ненулевых диагональных элементов $\mu_i,\nu_i$ и матрицы $(s_{n-1})^2,t^2,(s_{n-1}t)^4$ лежат в центре $\Sp_{2n}(\overline{\F}_p)$, в частности, являются скалярными. Непосредственно проверяется, что
\[\begin{array}{rcl}
(s_{n-1})^2\negthickspace & =\negthickspace & \diag(\mu_1^2,\ldots,\mu_{n-2}^2,\mu_{n-1}\mu_n,\mu_{n-1}\mu_n,\mu_1^{-2},
\ldots,\mu_{n-2}^{-2},(\mu_{n-1}\mu_n)^{-1},(\mu_{n-1}\mu_n)^{-1}),\\
t^2\negthickspace & =\negthickspace & \diag(\nu_1^2,\ldots,\nu_{n-2}^2,\nu_{n-1}^2,-1,\nu_1^{-2},\ldots,\nu_{n-2}^{-2},\nu_{n-1}^{-2},-1),
\end{array}\]
откуда, в частности, $\mu_1^2=\mu_1^{-2}$ и $\nu_1^2=\nu_1^{-2}$. Далее,
$$(s_{n-1}t)^4=\diag((\mu_1\nu_1)^4,\ldots,(\mu_{n-2}\nu_{n-2})^4,-1,-1,(\mu_1\nu_1)^{-4},\ldots,(\mu_{n-2}\nu_{n-2})^{-4},-1,-1),$$
откуда $(\mu_1\nu_1)^4=-1$. Противоречие с $\mu_1^4=\nu_1^4=1$.
\end{proof}

\noindent  Теорема~\ref{th1} следует из замечания~\ref{rem0}, леммы~\ref{lem-1} и пункта (2) леммы~\ref{lem0}. Следствие~\ref{cor1.5} следует из замечания~\ref{rem0} и пункта (1) леммы~\ref{lem0}.

\section{Вспомогательные результаты}
Пусть $\sigma$ отображает $\GL_{2n}(\overline{\F}_p)$ в себя по правилу $(a_{ij})\mapsto(a_{ij}^q)$, где $q$~--- степень простого числа $p$. Тогда $G=\overline{G}_{\sigma}=\Sp_{2n}(q)$ и $\widetilde{G}=O^{p'}(\overline{G}_{\sigma})=\PSp_{2n}(q)$. Отметим, что такие обозначения будут удобны при дальнейшем изложении, несмотря на то, что в теореме~\ref{th2} группа $\PSp_{2n}(q)$ обозначена через $G$. Отображение $\sigma$ действует на $W\simeq\Sl_n$ тривиально, поэтому классы $\sigma$-сопряженности совпадают с обычными классами сопряженности. Опустив знаки перед элементами из $\{1,2,\ldots,n,-1,-2,\ldots,-n\}$, получим гомоморфизм из группы $\Sl_n$ на группу $\Sym_n$. Пусть $\varphi\in\Sl_n$ отображается в цикл $(i_1i_2\ldots i_k)$  и оставляет на месте все элементы, отличные от $\pm i_1,\pm i_2,\ldots,\pm i_k$. Если $\varphi(i_k)=i_1$, то назовем $\varphi$ {\em положительным циклом длины $k$}; если $\varphi(i_k)=-i_1$, то назовем $\varphi$ {\em отрицательным циклом длины $k$}. Образ произвольного элемента $\varphi$ из $\Sl_n$ единственным образом раскладывается в произведение независимых циклов, и в соответствии с этим разложением $\varphi$ единственным образом представим в виде произведения независимых положительных и отрицательных циклов. Длины этих циклов вместе с их знаками задают множество целых чисел, которое называется циклическим типом элемента $\varphi$.

Два элемента из $\Sl_n$ сопряжены тогда и только тогда, когда их циклические типы совпадают. Пусть $n=n'+n''$, а $\{n_1,\ldots,n_k\}$ и $\{n_{k+1},\ldots,n_m\}$~--- разбиения чисел $n'$ и $n''$, соответственно. Циклический тип $\{-n_1,\ldots,-n_k,n_{k+1},\ldots,n_m\}$ будет обозначаться через $(\overline{n_1})\ldots(\overline{n_k})(n_{k+1})\ldots(n_m)$. В дальнейшем будут использоваться следующие элементы:\vspace{1em}

\noindent $\sigma_1=(1,2,\ldots,n_1)$,\\
$\sigma_{i+1}=(n_1+\ldots+n_i+1,n_1+\ldots+n_i+2,\ldots,n_1+\ldots+n_i+n_{i+1})$,\\
$\omega_1=(1,2,\ldots,n_1)(-1,-2,\ldots,-n_1)$,\\
$\omega_{i+1}=(n_1+\ldots+n_i+1,\ldots,n_1+\ldots+n_i+n_{i+1})(-(n_1+\ldots+n_i+1),\ldots,-(n_1+\ldots+n_i+n_{i+1}))$,\\
$\varpi_1=(1,2,\ldots,n_1,-1,-2,\ldots,-n_1)$,\\
$\varpi_{i+1}=(n_1+\ldots+n_i+1,\ldots,n_1+\ldots+n_i+n_{i+1},-(n_1+\ldots+n_i+1),\ldots,-(n_1+\ldots+n_i+n_{i+1}))$,\\
$\tau_1=(1,-1)(2,-2)\ldots(n_1,-n_1)$,\\
$\tau_{i+1}=(n_1+\ldots+n_i+1,-(n_1+\ldots+n_i+1))\ldots(n_1+\ldots+n_i+n_{i+1},-(n_1+\ldots+n_i+n_{i+1}))$.\\

\noindent В качестве стандартного представителя типа $(\overline{n_1})\ldots(\overline{n_k})(n_{k+1})\ldots(n_m)$ будем использовать подстановку $\varpi_1\ldots\varpi_k\omega_{k+1}\ldots\omega_{m}$. Строение максимальных торов в группах $\Sp_{2n}(q)$ хорошо известно. Воспользуемся этим описанием из работы \cite[Предолжение 3.1]{ButGre}:

\begin{prop}\label{prop}
Пусть $w$~--- стандартный представитель типа $(\overline{n_1})\ldots(\overline{n_k})(n_{k+1})\ldots(n_m)$. Положим $\varepsilon_i=-$, если $i\leqslant k$, и $\varepsilon_i=+$ в противном случае. Пусть $T$~--- подгруппа в $\Sp_{2n}(\overline{\F}_p)$, состоящая из всех диагональных матриц вида
$$\bd(D_1, D_2,\ldots,D_m, D_1^{-1}, D_2^{-1},\ldots, D_m^{-1}),$$
где $D_i=\diag(\lambda_i, \lambda_i^q,\ldots, \lambda_i^{q^{n_i-1}})$, $\lambda_i^{q^{n_i}-\varepsilon_i1}=1$ для всех $i\in\{1,2,\ldots,m\}$. Тогда $\overline{T}_{\sigma w}\simeq T$.
\end{prop}

\noindent Пусть $N=(N_{\overline{G}}(\overline{T}))_{\sigma w}$. Поскольку $(N_{\overline{G}}(\overline{T}^g))_\sigma=((N_{\overline{G}}(\overline{T}))_{\sigma w})^g$, то в силу предложения~\ref{normalizer} имеем
$$N/T=(N_{\overline{G}}(\overline{T}))_{\sigma w}/\overline{T}_{\sigma w}\simeq(N_{\overline{G}}({\overline{T}}^g))_{\sigma}/({\overline{T}}^g)_{\sigma}\simeq C_{W,\sigma}(w)=C_W(w).$$

\begin{remark}\label{rem01} Как отмечалось в замечании~\ref{rem0}, в случае четной характеристики поля группа $\overline{H}_{sc}\simeq\Sl_n$ содержится в $\overline{N}_{sc}$. Очевидно, что $H=(\overline{H}_{sc})_\sigma=\overline{H}_{sc}\leqslant\Sp_{2n}(q)$. Кроме того, $N/T\simeq C_W(w)$ и $N\leqslant T\rtimes H$. Следовательно, существует подгруппа $H_w\simeq C_W(w)$ в группе $H$, такая что $N=T\rtimes H_w$. Так как центр $\Sp_{2n}(q)$ тривиален, то любой максимальный тор группы $\PSp_{2n}(q)$ также имеет дополнение в своем нормализаторе.
\end{remark}

\noindent Всюду далее рассматривается случай нечетной характеристики поля $\overline{\F}_p$. Элементы $\varpi_1,\ldots,\varpi_k,\omega_{k+1},\ldots,\omega_m,\tau_{k+1},\ldots,\tau_m$ лежат в $C_W(w)$. Выберем представителей для этих элементов в группе $N$. Элементы $\omega_{k+1},\ldots,\omega_m$ принадлежат $\Sp_{2n}(q)$, а значит и $N$. Через $I_j=\diag(1,\ldots,1,1)$ и $C_j=\diag(1,\ldots,1,-1)$ будут обозначаться единичная и диагональная матрицы размера $n_j$ соответственно, а через $I$~--- единичная матрица всей группы. В качестве представителей элементов $\varpi_1,\ldots,\varpi_k$ и $\tau_{k+1},\ldots,\tau_m$ в группе $N$ возьмем следующие:

\begin{center}
$\bd(I_1, \ldots,I_{j-1},I_j,I_{j+1},\ldots,I_m,I_1, \ldots,I_{j-1},C_j,I_{j+1},\ldots,I_m)\varpi_j$\qquad для $1\leqslant j\leqslant k$;
$\bd(I_1, \ldots,I_{j-1},I_j,I_{j+1},\ldots,I_m,I_1, \ldots,I_{j-1},-I_j,I_{j+1},\ldots,I_m)\tau_j$\qquad для $k+1\leqslant j\leqslant m$.
\end{center}

\noindent
Напомним, что мы отождествляем соответствующие элементы группы подстановочных матриц размерности $2n$ и группы подстановок $\Sym_{2n}$. Более того, существует естественное вложение группы $\GL_{n_i}(\overline{\F}_p)$ (соответственно, $\GL_{2n_i}(\overline{\F}_p)$) в группу $\GL_n(\overline{\F}_p)$ (соответственно, $\GL_{2n}(\overline{\F}_p)$) и мы будем отождествлять соответствующие элементы, используя те же обозначения $\sigma_i$ (соответственно, $\omega_i,\varpi_i,\tau_i$).\\

\noindent В дальнейшем нам потребуются следующие леммы~\ref{lem1}--\ref{lem4}.

\begin{lem}\label{lem1}
Пусть $\{n_1, n_2\}$~--- разбиение числа $n$. Пусть $s_1=\bd(T_1, T_2)\sigma_1$, $s_2=\bd(T'_1, T'_2)\sigma_2$, где
$T_1=\diag(\lambda_1, \lambda_2,\ldots, \lambda_{n_1})$, $T_2=\diag(\mu_1, \mu_2,\ldots, \mu_{n_2})$,
$T'_1=\diag(\lambda'_1, \lambda'_2,\ldots, \lambda'_{n_1})$, $T'_2=\diag(\mu'_1, \mu'_2,\ldots, \mu'_{n_2})$. Тогда если $s_1s_2=s_2s_1(zI)$ для некоторого $z\in\overline{\F}_p$, то $\lambda'_{i+1}=\lambda'_iz$, $\mu_j=\mu_{j+1}z$, $\lambda'_1=\lambda'_{n_1}z$, $\mu_{n_2}=\mu_1z$, где $i \in \{1,2,\ldots,n_1-1\}$, $j \in \{1,2,\ldots,n_2-1\}$, а также $z^{n_1}=z^{n_2}=1$.

\begin{proof}
В силу равенства $s_1s_2=s_2s_1(zI)$ имеем $T_1\sigma_1T'_1=T'_1T_1\sigma_1zI_{1}$ и $T_2T'_2\sigma_2=T'_2\sigma_2T_2zI_{2}$. Следовательно,
\begin{center}
$T_1(T'_1)^{\sigma_1^{-1}}=T'_1T_1zI_{1}=T_1T'_1zI_{1}$ и $T'_1=(T'_1)^{\sigma_1}zI_{1}$.
\end{center}
Откуда получаем, что $\lambda'_{i+1}=\lambda'_iz$ для всех $i \in \{1,2,\ldots,n_1-1\}$ и $\lambda'_1=\lambda'_{n_1}z$. Из полученных равенств имеем $\lambda'_1=\lambda'_{n_1}z=\lambda'_{n_1-1}z^2=\ldots=\lambda'_1z^{n_1}$ и $z^{n_1}=1$. Аналогично, получаем
\begin{center}
$T_2T'_2\sigma_2=T'_2\sigma_2T_2zI_{2}$,  $T_2T'_2=T'_2(T_2zI_{2})^{\sigma_2^{-1}}$ и $(T_2)^{\sigma_2}=(T_2)zI_{2}$.
\end{center}
Таким образом, $\mu_j=\mu_{j+1}z$ для всех $j \in \{1,2,\ldots,n_2-1\}$, $\mu_{n_2}=\mu_1z$ и как следствие полученных равенств имеем $z^{n_2}=1$.
\end{proof}
\end{lem}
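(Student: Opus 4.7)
The key observation is that $\sigma_1$ and $\sigma_2$ have disjoint supports: $\sigma_1$ moves only the indices $1,\ldots,n_1$ (and their negatives), while $\sigma_2$ moves only $n_1{+}1,\ldots,n_1{+}n_2$. Consequently $\sigma_1$ commutes with the block $T'_2$ and $\sigma_2$ commutes with the block $T_1$, and the products $s_1s_2$ and $s_2s_1$ split cleanly into monomial factors supported on each block independently. So the whole lemma reduces to a direct matrix calculation done one block at a time.

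First I would expand both sides of $s_1s_2=s_2s_1\,zI$ and equate them block by block. On the first block this gives $T_1\sigma_1 T'_1 = T'_1 T_1\sigma_1\,zI_{n_1}$, and on the second block $T_2T'_2\sigma_2 = T'_2\sigma_2 T_2\,zI_{n_2}$. Using that $T_1,T'_1$ (respectively $T_2,T'_2$) are diagonal and therefore commute, I can cancel $T_1$ from the first and $T'_2$ from the second, obtaining conjugation-type identities $\sigma_1 T'_1\sigma_1^{-1}=T'_1\,zI_{n_1}$ and, symmetrically, $\sigma_2^{-1}T_2\sigma_2 = T_2\,z^{-1}I_{n_2}$ (the precise direction depends on the convention for $T^{\sigma}$, but these are the only two possibilities). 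Rewriting these as $T'_1=(T'_1)^{\sigma_1}\,zI_{n_1}$ and $T_2=(T_2)^{\sigma_2}\,zI_{n_2}$ (up to convention) puts the identities in a form where one can simply read off diagonal entries.

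Next I would apply the explicit cycle structure of $\sigma_1=(1,2,\ldots,n_1)$ and $\sigma_2=(n_1+1,\ldots,n_1+n_2)$. Entry by entry, $T'_1=(T'_1)^{\sigma_1}zI$ becomes the system $\lambda'_{i+1}=\lambda'_iz$ for $i=1,\ldots,n_1-1$ together with the wrap-around $\lambda'_1=\lambda'_{n_1}z$, and analogously for the $\mu_j$. Finally, iterating the wrap-around relation around the entire cycle yields $\lambda'_1=\lambda'_{n_1}z=\lambda'_{n_1-1}z^2=\cdots=\lambda'_1\,z^{n_1}$, so $z^{n_1}=1$, and the same argument on the second block forces $z^{n_2}=1$.

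There is no deep obstacle here; the whole argument is bookkeeping. The only point that demands real care is the convention for how conjugation by a permutation matrix acts on the indices of a diagonal matrix, so that the shift of subscripts and the power of $z$ come out consistently (and the recursion is $\lambda'_{i+1}=\lambda'_iz$ rather than $\lambda'_iz^{-1}$). Once that bookkeeping is fixed, the remaining claims follow immediately by comparing entries and iterating around each cycle.
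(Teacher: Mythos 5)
Your proposal is correct and follows essentially the same route as the paper: split $s_1s_2=s_2s_1(zI)$ into the two blocks, cancel the commuting diagonal factors to get $T'_1=(T'_1)^{\sigma_1}zI$ and the analogous relation for $T_2$, read off the entries along each cycle, and iterate the wrap-around to obtain $z^{n_1}=z^{n_2}=1$. The only caveat you flag --- fixing the conjugation convention so that the recursion runs as $\lambda'_{i+1}=\lambda'_iz$ on the first block but $\mu_j=\mu_{j+1}z$ on the second --- is indeed the only bookkeeping point, and the paper resolves it exactly as you describe.
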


\begin{lem}\label{lem2}
Пусть $\{n_1, n_2\}$~--- разбиение числа $n$, $q$~--- нечетно. Пусть \\ $t_1=\bd(T_1, D_2, T_3,D_2^{-1})\varpi_1$, $t_2=\bd(D'_1, T'_2, (D'_1)^{-1}, T'_4)\varpi_2$, где $T_1, T'_2, T_3, T'_4$~--- произвольные невырожденные матрицы, $D_2=\diag(\mu_1, \mu_1^q, \ldots, \mu_1^{q^{n_2-1}})$, $D'_1=\diag(\lambda_2, \lambda_2^q, \ldots, \lambda_2^{q^{n_1-1}})$. Тогда
\begin{itemize}
  \item[{\em (1)}] Если $t_1t_2=t_2t_1$, то $\mu_1^2=\lambda_2^2=1;$
  \item[{\em (2)}] Если $t_1t_2=-t_2t_1$, то $\mu_1^2=\lambda_2^2=-1$ и $n_1, n_2$~--- нечетны. Более того, $\mu_1^{q-1}=-1$, если $n_2>1$ и $\lambda_2^{q-1}=-1$, если $n_1>1$.
\end{itemize}
\end{lem}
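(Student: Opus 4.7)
The plan is to imitate the direct computation from Lemma~\ref{lem1}, but with the longer signed cycles $\varpi_1,\varpi_2$ of lengths $2n_1,2n_2$ in place of $\sigma_1,\sigma_2$. The crucial observation is that $\varpi_1$ permutes only the coordinates in $\{1,\ldots,n_1,-1,\ldots,-n_1\}$ while $\varpi_2$ permutes only those in $\{n_1{+}1,\ldots,n,-(n_1{+}1),\ldots,-n\}$, so they commute as permutation matrices; moreover, conjugation by $\varpi_i$ via the identity $\pi B=(\pi B\pi^{-1})\pi$ cyclically shifts the diagonal entries only within the two corresponding blocks and fixes the other two. Applying this to the putative relation $t_1 t_2 = z\, t_2 t_1$ with $z\in\{1,-1\}$ and cancelling the common factor $\varpi_1\varpi_2$ reduces it to an equality of block-diagonal matrices; the block structure then decouples the resulting system so that blocks~1 and~3 give constraints only on $\lambda_2$ (with $T_1,T_3$ cancelling) and blocks~2 and~4 give constraints only on $\mu_1$ (with $T'_2,T'_4$ cancelling).

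Specifically, the ``interior'' entries of each cycle yield $\lambda_2^{q-1}=z$ (provided $n_1>1$) and $\mu_1^{q-1}=z$ (provided $n_2>1$), while the ``wrap-around'' entry yields $\lambda_2^{q^{n_1-1}+1}=z$ and $\mu_1^{q^{n_2-1}+1}=z$ in all cases (collapsing to $\lambda_2^2=z$ or $\mu_1^2=z$ when $n_i=1$). In case~(1), $z=1$: from $\lambda_2^{q-1}=1$ one derives $\lambda_2^{q^{n_1-1}-1}=1$, and comparison with $\lambda_2^{q^{n_1-1}+1}=1$ gives $\lambda_2^2=1$; symmetrically $\mu_1^2=1$. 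In case~(2), $z=-1$: the interior relations are precisely the claimed clauses $\lambda_2^{q-1}=-1$ (for $n_1>1$) and $\mu_1^{q-1}=-1$ (for $n_2>1$). Raising $\lambda_2^{q-1}=-1$ to the power $1+q+\dots+q^{n_1-2}$, whose parity equals $n_1-1$ since $q$ is odd, gives $\lambda_2^{q^{n_1-1}-1}=(-1)^{n_1-1}$; dividing $\lambda_2^{q^{n_1-1}+1}=-1$ by this yields $\lambda_2^2=(-1)^{n_1}$. Since $\lambda_2^{q-1}=-1$ with $q-1$ even forces $\lambda_2^2\neq 1$, we conclude $\lambda_2^2=-1$ and $n_1$ odd; the symmetric argument handles $\mu_1$ and $n_2$.

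I expect the main obstacle to be the careful bookkeeping of the wrap-around entry of each long cycle, which produces a relation of a qualitatively different form from the shift relations and is the ultimate source of the parity conditions on $n_1,n_2$ in case~(2). One must also keep track of the edge cases $n_i=1$, in which the interior constraint is vacuous and only the wrap-around relation survives; this explains why the clauses $\lambda_2^{q-1}=-1$ and $\mu_1^{q-1}=-1$ in the statement are conditional on $n_1>1$ and $n_2>1$, respectively.
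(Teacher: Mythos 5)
Your proposal is correct and follows essentially the same route as the paper: the commutation relation decouples into the block conditions $(D')^{\varpi_1}=zD'$ and $D^{\varpi_2}=zD$ (the arbitrary blocks $T_1,T_3,T'_2,T'_4$ cancelling exactly as you describe), and the resulting cyclic system of equations yields the shift relations $\lambda_2^{q-1}=z$, $\mu_1^{q-1}=z$ together with the wrap-around relations that produce $\lambda_2^2=(-1)^{n_1}$, $\mu_1^2=(-1)^{n_2}$ and hence the parity conditions. The only cosmetic difference is that the paper telescopes the chain of equations $\mu_1^{-1}=-\mu_1^{q^{n_2-1}}=\cdots=(-1)^{n_2}\mu_1$, whereas you raise the interior relation to the power $1+q+\cdots+q^{n_i-2}$; these are the same computation.
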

\begin{proof}
Пусть $D=\bd(D_2,(D_2)^{-1})$, $D'=\bd(D'_1,(D'_1)^{-1})$.

\noindent (1) Аналогично доказательству леммы~\ref{lem1} равенство $t_1t_2=t_2t_1$ равносильно двум равенствам $D^{\varpi_2}=D$ и $(D')^{\varpi_1}=(D')$. Из первого равенства следует, что все диагональные элементы матриц $D_2$ и $D_2^{-1}$ совпадают. В частности, $\mu_1=\mu_1^{-1}$, откуда $\mu_1^2=1$. Аналогично из второго равенства получаем $\lambda_2^2=1$.

\noindent (2) В данном случае равенство $t_1t_2=-t_2t_1$ равносильно двум равенствам $D^{\varpi_2}=-D$ и $(D')^{\varpi_1}=-(D')$. Из первого равенства следует:
\[\left\{\begin{array}{rcl}
\mu_1 & = & -\mu_1^{-q^{n_2-1}} \\
\mu_1^q & = & -\mu_1 \\
& \vdots &  \\
\mu_1^{q^{n_2-1}} & = & -\mu_1^{q^{n_2-2}} \\
\mu_1^{-1} & = & -\mu_1^{q^{n_2-1}} \\
\mu_1^{-q} & = & -\mu_1^{-1} \\
& \vdots &  \\
\mu_1^{-q^{n_2-1}} & = & -\mu_1^{-q^{n_2-2}}
\end{array}
\right.\]
Следовательно, при $n_2>1$ имеем $\mu_1^{q-1}=-1$ и $\mu_1^{-1}=-\mu_1^{q^{n_2-1}}=(-1)^2\mu_1^{q^{n_2-2}}=\ldots=(-1)^{n_2}\mu_1$, откуда $\mu_1^2=(-1)^{n_2}$. Так как $(q-1)$~--- четно, то $n_2$ должно быть нечетным и $\mu_1^2=-1$. В случае $n_2=1$ получаем, что $\mu_1^{-1}=-\mu_1$, откуда $\mu_1^2=-1$. Аналогично, из второго равенства получаем, что $n_1$ также должно быть нечетным, $\lambda_2^2=-1$ и если $n_1>1$, то $\lambda_2^{q-1}=-1$.
\end{proof}

\begin{remark}\label{rem1}
Заключение леммы~\ref{lem2} также справедливо для блочно-диагональных матриц с количеством блоков больше двух.
\end{remark}

\begin{cor}\label{cor1}
Пусть $\{n_1, n_2\}$~--- разбиение числа $n$, $u_1=\bd(T_1, D_2, T_3,D_2^{-1})\tau_1$, $u_2=\bd(D'_1, T'_2, (D'_1)^{-1}, T'_4)\tau_2$,
где $T_1, T'_2, T_3, T'_4$~--- произвольные невырожденные матрицы, $D_2=\diag(\mu_1, \mu_1^q, \ldots, \mu_1^{q^{n_2-1}})$, $D'_1=\diag(\lambda_2, \lambda_2^q, \ldots, \lambda_2^{q^{n_1-1}})$. Если $u_1u_2=u_2u_1$, то $\mu_1^2=\lambda_2^2=1$.
\end{cor}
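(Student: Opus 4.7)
The plan is to mimic Lemma~\ref{lem2}(1) with $\varpi_1,\varpi_2$ replaced by the involutions $\tau_1,\tau_2$, as permitted by Remark~\ref{rem1}. The setting is in fact structurally simpler, because $\tau_1$ and $\tau_2$ have disjoint supports --- hence commute --- and each of them acts on a block-diagonal matrix by swapping two of its four blocks entrywise.

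The first step is to set $D=\bd(D_2,D_2^{-1})$, viewed as living on blocks $2$ and $4$, and $D'=\bd(D'_1,(D'_1)^{-1})$, viewed as living on blocks $1$ and $3$. Writing out the commutation $u_1u_2=u_2u_1$ and using $\tau_1\tau_2=\tau_2\tau_1$, I would rearrange it to
$$X_1X_2^{\tau_1}=X_2X_1^{\tau_2},$$
where $X_1,X_2$ are the block-diagonal factors of $u_1,u_2$. Since $X_1^{\tau_2}$ differs from $X_1$ only on blocks $2,4$ and $X_2^{\tau_1}$ differs from $X_2$ only on blocks $1,3$, comparing the two sides block by block and cancelling the invertible diagonal factors $T_i,T'_i$ reduces the commutation to the pair of relations $D^{\tau_2}=D$ and $(D')^{\tau_1}=D'$, in direct analogy with Lemma~\ref{lem2}(1).

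The second step is then immediate: $\tau_2$ swaps block $2$ with block $4$ entrywise, so $D^{\tau_2}=\bd(D_2^{-1},D_2)$, and $D^{\tau_2}=D$ forces $D_2^2=I$, hence $\mu_1^2=1$. The analogous computation for $\tau_1$ yields $(D'_1)^2=I$, hence $\lambda_2^2=1$. Unlike Lemma~\ref{lem2}(2), no $\pm$-sign propagates through the Frobenius here, so no auxiliary condition such as $\mu_1^{q-1}=-1$ arises. The main step --- and essentially the only bookkeeping worth checking --- is the reduction in the previous paragraph; once it is done, the conclusion $\mu_1^2=\lambda_2^2=1$ follows at once.
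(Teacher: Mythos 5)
Your proposal is correct and follows the paper's own argument essentially verbatim: the paper likewise reduces $u_1u_2=u_2u_1$ to the pair of conditions $D^{\tau_2}=D$ and $(D')^{\tau_1}=D'$ for $D=\bd(D_2,D_2^{-1})$, $D'=\bd(D'_1,(D'_1)^{-1})$, which in this case amount to $D_2^{-1}=D_2$ and $(D'_1)^{-1}=D'_1$, hence $\mu_1^2=\lambda_2^2=1$. Your explicit block-by-block verification of the rearrangement $X_1X_2^{\tau_1}=X_2X_1^{\tau_2}$ is just a spelled-out version of the paper's appeal to the computation in Lemma~\ref{lem2}.
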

\begin{proof}
Пусть $D=\bd(D_2,(D_2)^{-1})$, $D'=\bd(D'_1,(D'_1)^{-1})$. Аналогично доказательству леммы~\ref{lem2} равенство $t_1t_2=t_2t_1$ равносильно двум равенствам $D^{\tau_2}=D$ и $(D')^{\tau_1}=(D')$, что в свою очередь равносильно $D_2^{-1}=D_2$ и $(D'_1)^{-1}=D'_1$. Следовательно, $\mu_1=\mu_1^{-1}$ и $\lambda_2=\lambda_2^{-1}$, откуда $\mu_1^2=\lambda_2^2=1$.
\end{proof}

\begin{remark}\label{rem2}
Заключение следствия~\ref{cor1} также справедливо для блочно-диагональных матриц с количеством блоков больше двух.
\end{remark}

\begin{lem}\label{lem2.5}
Пусть $\{n_1, n_2\}$~--- разбиение числа $n$, $q$~--- нечетно. Пусть \\ $t_1=\bd(T_1, D_2, T_3,D_2^{-1})\varpi_1$, $u_2=\bd(D'_1, T'_2, (D'_1)^{-1}, T'_4)\tau_2$, где $T_1, T'_2, T_3, T'_4$~--- произвольные невырожденные матрицы, $D_2=\diag(\mu_1, \mu_1^q, \ldots, \mu_1^{q^{n_2-1}})$, $D'_1=\diag(\lambda_2, \lambda_2^q, \ldots, \lambda_2^{q^{n_1-1}})$. Тогда
\begin{itemize}
  \item[{\em (1)}] Если $t_1u_2=u_2t_1$, то $\mu_1^2=1;$
  \item[{\em (2)}] Если $t_1u_2=-u_2t_1$, то $\mu_1^2=\lambda_2^2=-1$, $n_1$~--- нечетно. Более того, если $n_1>1$, то $\lambda_2^{q-1}=-1$.
\end{itemize}
\end{lem}
\begin{proof}
Рассуждения аналогичны доказательству леммы~\ref{lem2}.

\noindent (1) Пусть $D=\bd(D_2,(D_2)^{-1})$, $D'=\bd(D'_1,(D'_1)^{-1})$. Равенство $t_1u_2=u_2t_1$ влечет $D^{\tau_2}=D$. Следовательно, $D_2$ и $D_2^{-1}$ совпадают, откуда $\mu_1=\mu_1^{-1}$ и $\mu_1^2=1$. \\
(2) В данном случае равенство $t_1u_2=-u_2t_1$ равносильно двум равенствам $D^{\tau_2}=-D$ и $(D')^{\varpi_1}=-(D')$. Из первого равенства следует, что $D_2^{-1}=-D_2$ и $\mu_1^2=-1$. Расписывая второе равенство поэлементно, получаем:
\[\left\{\begin{array}{rcl}
\lambda_2 & = & -\lambda_2^{-q^{n_1-1}} \\
\lambda_2^q & = & -\lambda_2 \\
& \vdots &  \\
\lambda_2^{q^{n_1-1}} & = & -\lambda_2^{q^{n_1-2}} \\
\lambda_2^{-1} & = & -\lambda_2^{q^{n_1-1}} \\
\lambda_2^{-q} & = & -\lambda_2^{-1} \\
& \vdots &  \\
\lambda_2^{-q^{n_1-1}} & = & -\lambda_2^{-q^{n_1-2}}
\end{array}
\right.\]
Следовательно, при $n_1>1$ имеем $\lambda_2^{q-1}=-1$ и $\lambda_2^{-1}=-\lambda_2^{q^{n_1-1}}=(-1)^2\lambda_2^{q^{n_1-2}}=\ldots=(-1)^{n_1}\lambda_2$, откуда $\lambda_2^2=(-1)^{n_1}$. Так как $(q-1)$~--- четно, то $n_1$ должно быть нечетным и $\lambda_2^2=-1$. В случае $n_1=1$ получаем, что $\lambda_2^{-1}=-\lambda_2$, откуда $\lambda_2^2=-1$.
\end{proof}

\begin{lem}\label{lem3} 
Пусть $\{n_1, n_2,\ldots, n_m\}$~--- разбиение числа $n$, где $m\geqslant3$.\\ Пусть $s_1=\bd(T_1, T_2,\ldots,T_m)\sigma_1$, $s_2=\bd(T'_1, T'_2,\ldots,T'_m)\sigma_2$, где $T_2=\diag(\mu_1, \mu_2,\ldots, \mu_{n_2})$, $T'_1=\diag(\lambda'_1, \lambda'_2,\ldots, \lambda'_{n_1})$.
Тогда если $s_1s_2=s_2s_1(zI)$, то $z=1$, $\lambda'_1=\lambda'_2=\ldots=\lambda'_{n_1}$, $\mu_1=\mu_2=\ldots=\mu_{n_2}$.
\end{lem}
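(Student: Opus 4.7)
The plan is a direct block-by-block calculation. Write $s_1 = \bd(T_1,\ldots,T_m)\sigma_1$ and $s_2 = \bd(T'_1,\ldots,T'_m)\sigma_2$ and observe that the cycles $\sigma_1=(1,\ldots,n_1)$ and $\sigma_2=(n_1+1,\ldots,n_1+n_2)$ have disjoint supports, confined to the first and the second block respectively; in particular they commute with each other, and each commutes with every $\bd$-entry lying outside its own block.

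First I would extract the easy piece of information: for any $j\geqslant 3$ the $j$-th diagonal block of both $s_1s_2$ and $s_2s_1$ is exactly $T_jT'_j$, since diagonal matrices commute and neither $\sigma_1$ nor $\sigma_2$ touches this block. Hence the relation $s_1s_2 = s_2s_1(zI)$ restricted to block~$j$ becomes $T_jT'_j = z\,T_jT'_j$, and invertibility forces $z=1$. This is precisely where the assumption $m\geqslant 3$ is used; in the case $m=2$ treated in Lemma~\ref{lem1} this witness block is absent, and $z$ is genuinely allowed to be a non-trivial root of unity.

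With $z=1$ in hand, the block~$1$ component of the relation reads $T_1\sigma_1T'_1 = T_1T'_1\sigma_1$, which after cancellation of the invertible diagonal $T_1$ reduces to $\sigma_1T'_1\sigma_1^{-1}=T'_1$. Since $\sigma_1$ is a single $n_1$-cycle, it acts transitively on the $n_1$ diagonal positions of $T'_1$, so this invariance forces $\lambda'_1=\lambda'_2=\ldots=\lambda'_{n_1}$. The symmetric calculation on block~$2$ gives $T_2T'_2\sigma_2 = T'_2\sigma_2T_2$, which after cancellation of $T'_2$ yields $\sigma_2T_2\sigma_2^{-1}=T_2$ and hence $\mu_1=\mu_2=\ldots=\mu_{n_2}$. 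No real obstacle arises: everything is bookkeeping of block multiplication together with the transitivity of an $n_i$-cycle on its $n_i$ letters, and the only step worth being careful about is keeping the direction of conjugation $\sigma_i(\cdot)\sigma_i^{-1}$ straight.
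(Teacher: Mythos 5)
Your argument is correct and follows the same route as the paper: the paper also extracts $z=1$ from a block untouched by both cycles (it uses the $m$-th block, $T_mT'_m=T'_mT_mzI_m$), and then obtains the constancy of the $\lambda'_i$ and $\mu_j$ from the relations $T'_1=(T'_1)^{\sigma_1}zI_1$ and $(T_2)^{\sigma_2}=T_2zI_2$ of Lemma~\ref{lem1} specialized to $z=1$, which is exactly your transitivity-of-the-cycle argument. No issues.
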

\begin{proof}
В силу равенства $s_1s_2=s_2s_1(zI)$ имеем $T_mT'_m=T'_mT_mzI_{m}$, откуда следует, что $z=1$. Остальные равенства доказываются аналогично рассуждениям в лемме~\ref{lem1}.
\end{proof}

\begin{lem}\label{lem4}
Пусть $\{n_1, n_2\}$~--- разбиение числа $n$. Пусть $s=\bd(T_1, T_2)\sigma_1$, где $T_1=\diag(\lambda_1, \lambda_2,\ldots, \lambda_{n_1})$, $T_2=\diag(\mu_1, \mu_2,\ldots, \mu_{n_2})$. Тогда $s^{n_1}=zI$ в том и только в том случае, если $\lambda_1\lambda_2\ldots\lambda_{n_1}=\mu_1^{n_1}=\mu_2^{n_1}=\ldots=\mu_{n_2}^{n_1}=z$.
\end{lem}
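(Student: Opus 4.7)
The plan is to exploit that $\sigma_1=(1,2,\ldots,n_1)$ fixes every index in the second block $\{n_1+1,\ldots,n\}$, so as a matrix $\sigma_1$ has block-diagonal form $\bd(P_{n_1},I_{n_2})$, where $P_{n_1}$ is the permutation matrix of the $n_1$-cycle. Consequently
$$s=\bd(T_1P_{n_1},\,T_2),\qquad s^{n_1}=\bd\bigl((T_1P_{n_1})^{n_1},\,T_2^{n_1}\bigr),$$
and the equation $s^{n_1}=zI$ decouples into two independent block equations, which I would handle separately.

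For the second block the computation is immediate: $T_2^{n_1}=\diag(\mu_1^{n_1},\ldots,\mu_{n_2}^{n_1})$, and this equals $zI_{n_2}$ if and only if $\mu_1^{n_1}=\mu_2^{n_1}=\ldots=\mu_{n_2}^{n_1}=z$, which is exactly the second family of conditions in the lemma.

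For the first block I would track the action of $T_1P_{n_1}$ on the standard basis $e_1,\ldots,e_{n_1}$ of the first block. A single application of this operator sends any basis vector $e_i$ to $\lambda_{j}e_{\sigma_1(i)}$ for the appropriate index $j$, so after $n_1$ iterations the orbit of the $n_1$-cycle closes up and every basis vector returns to itself, having picked up precisely the product $\lambda_1\lambda_2\cdots\lambda_{n_1}$ as a scalar factor (the order in which the factors accumulate depends on the starting vector, but this is immaterial by commutativity of $\overline{\F}_p$). Hence $(T_1P_{n_1})^{n_1}=(\lambda_1\lambda_2\cdots\lambda_{n_1})I_{n_1}$, and this equals $zI_{n_1}$ iff $\lambda_1\lambda_2\cdots\lambda_{n_1}=z$. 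Combining the two equivalent conditions yields the lemma.

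There is no real obstacle here: the argument is purely a block-wise direct computation. The only mild annoyance is fixing a convention for turning the permutation $\sigma_1$ into a matrix (left vs.\ right action on basis vectors), but since the final answer is the commutative product of all the $\lambda_j$, the convention does not affect the result.
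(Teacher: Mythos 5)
Your argument is correct and is essentially the paper's own proof: the paper likewise computes $(T_1\sigma_1)^{n_1}=T_1T_1^{\sigma_1}\cdots T_1^{\sigma_1^{n_1-1}}=(\lambda_1\lambda_2\cdots\lambda_{n_1})I_{n_1}$ by observing that each diagonal entry accumulates the product of the $\lambda_j$ over the full cycle, and treats the second block $T_2^{n_1}=\diag(\mu_1^{n_1},\ldots,\mu_{n_2}^{n_1})$ as immediate. Your bookkeeping via the action on basis vectors is just a rephrasing of the same block-wise computation.
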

\begin{proof}

Поскольку $\sigma_1^{n_1}=I_{1}$ получаем, что $$(T_1\sigma_1)^{n_1}=T_1T_1^{\sigma_1^{-1}}T_1^{\sigma_1^{-2}}\ldots T_1^{\sigma_1^{-(n_1-1)}}\sigma^{n_1}=T_1T_1^{\sigma_1^{n_1-1}}T_1^{\sigma_1^{n_2-2}}\ldots T_1^{\sigma_1}= T_1T_1^{\sigma_1}T_1^{\sigma_1^{2}}\ldots T_1^{\sigma_1^{(n_1-1)}}.$$ Полученная матрица будет скалярной поскольку любой ее диагональный элемент будет равен произведению всех диагональных элементов матрицы $T_1$, то есть $(T_1\sigma_1)^{n_1}=\alpha I_{1}$, где $\alpha=\lambda_1\lambda_2\ldots\lambda_{n_1}$. Остальные равенства очевидны.
\end{proof}

\section{Доказательство теоремы \ref{th2}}
Напомним, что случай четной характеристики был рассмотрен в замечании~\ref{rem01}, поэтому всюду далее рассматривается случай нечетной характеристики поля и тогда центр группы $\Sp_{2n}(q)$ состоит из матриц $\pm I$. Максимальный тор из предложения~\ref{prop} будем называть максимальным тором, имеющим тип $(\overline{n_1})\ldots(\overline{n_k})(n_{k+1})\ldots(n_m)$. Следующая лемма ограничивает рассмотрение случаев, в которых нормализатор тора может быть расщепляем.

\begin{lem}\label{lem5}
Пусть $T$~--- максимальный тор в группе $G=\Sp_{2n}(q)$, имеющий тип $(\overline{n_1})\ldots(\overline{n_k})(n_{k+1})\ldots(n_m)$, и $\widetilde{T}$~--- образ тора $T$ в $\widetilde{G}=\PSp_{2n}(q)$. Тогда
\begin{itemize}
  \item[{\em (1)}] Тор $T$ не имеет дополнения в $N$;
  \item[{\em (2)}] Если $m\geqslant 3$, то тор $\widetilde{T}$ не имеет дополнения в $N_{\widetilde{G}}(\widetilde{T})$.
\end{itemize}
\begin{proof} (1) Предположим противное. Пусть $H$~--- дополнение для $T$ в $N$. Поскольку $(\varpi_i)^{n_i}=\tau_i$, то в любом случае элемент $\tau_1$ лежит в $C_W(w)\simeq N_G(T)/T$.  Пусть $u_1$~--- прообраз элемента $\tau_1$. Тогда элемент $u_1$ имеет вид
\begin{center}
$u_1=\bd(D_1, D_2,\ldots,D_m, D_1^{-1}(-I_1), D_2^{-1},\ldots,D_m^{-1})\tau_1$,
\end{center}

\noindent для некоторых диагональных матриц $D_1,\ldots,D_m$. Тогда
\begin{center}
$u_1^2=\bd(-I_1, D_2^2,\ldots,D_m^2, -I_1, D_2^{-2},\ldots,D_m^{-2})$.
\end{center}
С другой стороны, если $H$~--- дополнение для $T$ в $N$, то $u_1^2=I$. Противоречие. \\
(2) Предположим противное. Пусть $\widetilde{H}$~--- дополнение для $\widetilde{T}$ в $N_{\widetilde{G}}(\widetilde{T})$, $H$~--- прообраз $\widetilde{H}$ в $N$. Поскольку $(\varpi_i)^{n_i}=\tau_i$, то в любом случае элементы $\tau_1,\tau_2$ лежат в $C_W(w)\simeq N_{\widetilde{G}}(\widetilde{T})/\widetilde{T}$. Пусть $u_1, u_2$~--- прообразы элементов $\tau_1,\tau_2$ в $H$. Тогда

\vspace{1em}
\hspace{1cm}$u_1=\bd(D_1, D_2,\ldots,D_m, D_1^{-1}(-I_1), D_2^{-1},\ldots,D_m^{-1})\tau_1$,

\hspace{1cm}$u_2=\bd(D'_1, D'_2,\ldots,D'_m, (D'_1)^{-1}, (D'_2)^{-1}(-I_2),\ldots,(D'_m)^{-1})\tau_2$,

\vspace{1em}
\noindent где $D_2=\diag(\mu_1, \mu_1^q, \ldots, \mu_1^{q^{n_2-1}})$, $D'_1=\diag(\lambda_2, \lambda_2^q, \ldots, \lambda_2^{q^{n_1-1}})$. Поскольку $\widetilde{H}$~--- дополнение для $\widetilde{T}$ в $N_{\widetilde{G}}(\widetilde{T})$, то $u_1u_2=\varepsilon u_2u_1$ $u_1^2=\varepsilon_1I, u_2^2=\varepsilon_2I$ . По условию $m\geqslant 3$, поэтому по лемме~\ref{lem3} получаем $u_1u_2=u_2u_1$, и далее в силу замечания~\ref{rem2} и следствия~\ref{cor1} имеем $\mu_1^2=1$. Так как $u_1^{2}=\varepsilon_1I_n$, то по лемме~\ref{lem4} получаем $-1=\mu_1^{2}$. Противоречие с $\mu_1^2=1$.
\end{proof}
\end{lem}

\begin{lem}\label{lem6}
Пусть $T$~--- максимальный тор в группе $G=\Sp_{2n}(q)$, имеющий тип $(n)$ или $(\overline{n})$, и $\widetilde{T}$~--- образ тора $T$ в $\widetilde{G}=\PSp_{2n}(q)$. Тогда тор $\widetilde{T}$ имеет дополнение в $N_{\widetilde{G}}(\widetilde{T})$.
\begin{proof}
Пусть $t_1=\bd(I_1,C_1)\varpi_1$, $s_1=\omega_1$, $u_1=\bd(I_1,-I_1)\tau_1$. Тогда $t_1, s_1, u_1$ принадлежат $\Sp_{2n}(q)$, $t_1^{2n}=-I$, $s_1^n=I$, $u_1^2=-I$, $s_1u_1=u_1s_1$. Для  тора типа $(\overline{n})$ положим $H=\langle t_1\rangle$, а для тора $(n)$ положим $H=\langle s_1\rangle\times\langle u_1\rangle$. Пусть $\widetilde{H}$~--- образ группы $H$ в $\PSp_{2n}(q)$, тогда $\widetilde{H}$~--- дополнение для тора $\widetilde{T}$ в $N_{\widetilde{G}}(\widetilde{T})$.
\end{proof}
\end{lem}

\begin{lem}\label{lem7}
Пусть $T$~--- максимальный тор в группе $G=\Sp_{4}(q)$, $\widetilde{T}$~--- образ тора $T$ в $\widetilde{G}=\PSp_{4}(q)$. Тогда
\begin{itemize}
  \item[{\em (1)}] Если тор $T$ имеет тип $(\overline{1})(1)$, то $\widetilde{T}$ не имеет дополнения в $N_{\widetilde{G}}(\widetilde{T})$;
  \item[{\em (2)}] Если тор $T$ имеет тип $(1)(1)$, то $\widetilde{T}$ имеет дополнение в $N_{\widetilde{G}}(\widetilde{T})$ тогда и только тогда, когда $q\equiv1\pmod4$;
  \item[{\em (3)}] Если тор $T$ имеет тип $(\overline{1})(\overline{1})$, то $\widetilde{T}$ имеет дополнение в $N_{\widetilde{G}}(\widetilde{T})$ тогда и только тогда, когда $q\equiv3\pmod4$;
\end{itemize}
\begin{proof}
(1) Предположим противное. Пусть $\widetilde{H}$~--- дополнение для $\widetilde{T}$ в $N_{\widetilde{G}}(\widetilde{T})$, $H$~--- прообраз $\widetilde{H}$ в $N$. Пусть $t_1, u_2$~--- прообразы элементов  $\varpi_1,\tau_2$ в $H$. Тогда

\begin{center}
$t_1=\diag(\lambda_1, \mu_1, -\lambda_1^{-1}, \mu_1^{-1})\varpi_1,\qquad u_2=\diag(\lambda_2, \mu_2, \lambda_2^{-1}, -\mu_2^{-1})\tau_2$,
\end{center}

\noindent где $\lambda_1^{q+1}=\lambda_2^{q+1}=1$, $\mu_1^{q-1}=\mu_2^{q-1}=1$. Поскольку $\widetilde{H}$~--- дополнение для $\widetilde{T}$ в $N_{\widetilde{G}}(\widetilde{T})$, то $t_1^2=\varepsilon_1I, u_2^2=\varepsilon_2I$ . С другой стороны, $t_1^2=\bd(-1, \mu_1^2, -1, \mu_1^{-2})$ и $u_2^2=\bd(\lambda_2^2,-1,\lambda_2^{-2},-1)$. Следовательно, $\mu_1^2=\lambda_2^2=-1$. Таким образом, $\mu_1^2=-1$ и $\mu_1^{q-1}=1$, что возможно только при $q\equiv1\pmod4$, а также $\lambda_2^2=-1$ и $\lambda_2^{q+1}=1$, что возможно только при $q\equiv3\pmod4$. Противоречие.

\noindent(2),(3) Необходимость условий в пунктах (2) и (3) доказывается аналогично пункту (1). Положим
\[ s_1=\lambda\left(  \begin{array}{cccc}
                0 & 0 & 1 & 0\\
                0 & 1 & 0 & 0 \\
                1 & 0 & 0 & 0 \\
                0 & 0 & 0 & -1 \\
           \end{array} \right),
s_2=\lambda\left(  \begin{array}{cccc}
                1 & 0 & 0 & 0\\
                0 & 0 & 0 & 1 \\
                0 & 0 & -1 & 0 \\
                0 & 1 & 0 & 0 \\
           \end{array} \right),
w=\left(  \begin{array}{cccc}
                0 & 1 & 0 & 0\\
                1 & 0 & 0 & 0 \\
                0 & 0 & 0 & 1 \\
                0 & 0 & 1 & 0 \\
           \end{array} \right).\]

В случае (2) возьмем $\lambda\in\overline{\F}_p$, такой что $\lambda^2=-1$ и $\lambda^{q-1}=1$, а в случае (3) возьмем $\lambda\in\overline{\F}_p$, такой что $\lambda^2=-1$ и $\lambda^{q+1}=1$. Тогда $s_1, s_2, w\in N, s_1^2=s_2^2=-I, w^2=I$. Кроме того, $s_1s_2=-s_2s_1, s_1^w=s_2$. Пусть $H=\langle s_1,s_2,w\rangle$, $\widetilde{H}$~--- образ группы $H$ в $\PSp_{2n}(q)$, тогда $\widetilde{H}$~--- дополнение для $\widetilde{T}$ в $N_{\widetilde{G}}(\widetilde{T})$.
\end{proof}
\end{lem}

\begin{lem}\label{lem8}
Пусть $T$~--- максимальный тор в группе $G=\Sp_{2n}(q)$, имеющий тип $(\overline{n_1})(\overline{n_2})$ и $\widetilde{T}$~--- образ тора $T$ в $\widetilde{G}=\PSp_{2n}(q)$. Тогда $\widetilde{T}$ имеет дополнение в $N_{\widetilde{G}}(\widetilde{T})$ в том и только в том случае, если $n_1, n_2$~--- нечетные и $q\equiv3\pmod4$.
\begin{proof}
Сначала докажем необходимость. Пусть $\widetilde{H}$~--- дополнение для $\widetilde{T}$ в $N_{\widetilde{G}}(\widetilde{T})$, $H$~--- прообраз $\widetilde{H}$ в $N$. Пусть $t_1, t_2$~--- прообразы элементов $\varpi_1,\varpi_2$ в $H$. Тогда

\begin{center}
$t_1=\bd(D_1, D_2,D_1^{-1}C_1, D_2^{-1})\varpi_1$, $t_2=\bd(D'_1, D'_2, (D'_1)^{-1}, (D'_2)^{-1}C_2)\varpi_2$,
\end{center}

\noindent где $D_2=\diag(\mu_1, \mu_1^q, \ldots, \mu_1^{q^{n_2-1}})$, $D'_1=\diag(\lambda_2, \lambda_2^q, \ldots, \lambda_2^{q^{n_1-1}})$.
Поскольку $\widetilde{H}$~--- дополнение для $\widetilde{T}$ в $N_{\widetilde{G}}(\widetilde{T})$, то $t_1t_2=\varepsilon t_2t_1$, $(t_1)^{2n_1}=\varepsilon_1I, (t_2)^{2n_2}=\varepsilon_2I$. Тогда из равенств $(t_1)^{2n_1}=\varepsilon_1I$ и $(t_2)^{2n_2}=\varepsilon_2I$ в силу леммы~\ref{lem4} имеем $\mu_1^{2n_1}=-1$ и $\lambda_2^{2n_2}=-1$ соответственно. Случай $t_1t_2= t_2t_1$ невозможен в силу пункта (1) леммы~\ref{lem2}, поэтому применяя пункт (2) леммы~\ref{lem2} получаем, что $n_1, n_2$~--- нечетны. Более того, $\mu_1^{q-1}=-1$, если $n_2>1$ и $\lambda_2^{q-1}=-1$, если $n_1>1$. Случай $n_1=n_2=1$ разобран в лемме~\ref{lem7}, поэтому $\mu_1^{q-1}=-1$ или $\lambda_2^{q-1}=-1$. Поскольку $\mu_1^{2n_1}=-1$ и $\lambda_2^{2n_2}=-1$, то $q\equiv3\pmod4$. Необходимость доказана, покажем достаточность. Пусть $n_1, n_2$~--- нечетные и $q\equiv3\pmod4$. Возьмем $\lambda\in\overline{\F}_p$, такой что $\lambda^2=-1$, тогда $\lambda^q=-\lambda$. Определим
$$t_1=\bd(I_1, D_2,C_1, D_2^{-1})\varpi_1, t_2=\bd(D'_1, I_2, (D'_1)^{-1}, C_2)\varpi_2,$$
где $D_2=\diag(\lambda,-\lambda,\ldots,\lambda,-\lambda,\lambda)$, $D'_1=\diag(\lambda,-\lambda,\ldots,\lambda,-\lambda,\lambda)$.
Тогда $t_1, t_2$ лежат в $\Sp_{2n}(q), t_1^{2n_1}=-I, t_2^{2n_2}=-I$ (см. лемму~\ref{lem4}) и $t_1t_2=-t_2t_1$. Если $n_1\neq n_2$, то $C_W(w)=\langle \varpi_1,\varpi_2\rangle\simeq\mathbb{Z}_{2n_1}\times\mathbb{Z}_{2n_2}$ и в качестве $H$ можно взять $H=\langle t_1, t_2\rangle$. Пусть $\widetilde{H}$~--- образ группы $H$ в $\PSp_{2n}(q)$, тогда $\widetilde{H}$~--- дополнение для $\widetilde{T}$ в $N_{\widetilde{G}}(\widetilde{T})$. Если $n_1=n_2$, то $C_W(w)\simeq(\mathbb{Z}_{n}\times\mathbb{Z}_{n})\rtimes\mathbb{Z}_2$. В этом случае пусть
\[\omega=\left(  \begin{array}{cccc}
                0 & I_1 & 0 & 0\\
                I_1 & 0 & 0 & 0 \\
                0 & 0 & 0 & I_1 \\
                0 & 0 & I_1 & 0 \\
           \end{array} \right).\]
Тогда $\omega\in\Sp_{2n}(q), \omega^2=I, t_1^{\omega}=t_2$. Положим $H=\langle t_1,t_2,\omega\rangle$, $\widetilde{H}$~--- образ группы $H$ в $\PSp_{2n}(q)$. Тогда $\widetilde{H}$~--- дополнение для $\widetilde{T}$ в $N_{\widetilde{G}}(\widetilde{T})$.
\end{proof}
\end{lem}

\begin{lem}\label{lem9}
Пусть $T$~--- максимальный тор в группе $G=\Sp_{2n}(q)$, имеющий тип $(n_1)(n_2)$ и $\widetilde{T}$~--- образ тора $T$ в $\widetilde{G}=\PSp_{2n}(q)$. Тогда $\widetilde{T}$ не имеет дополнения в $N_{\widetilde{G}}(\widetilde{T})$ в том и только в том случае, если $n_1, n_2$~--- нечетные и $q\equiv3\pmod4$.
\begin{proof} Рассмотрим сначала случай $q\equiv1\pmod4$. Пусть $\lambda\in\overline{\F}_p$, такой что $\lambda^2=-1, \lambda^{q-1}=1$. Определим следующие элементы
\begin{center}
$s_1=\omega_1, s_2=\omega_2$, $u_1=\bd(I_1, \lambda I_2,-I_1, \lambda^{-1}I_2)\tau_1, u_2=\bd(\lambda I_1, I_2, \lambda^{-1}I_1, -I_2)\tau_2.$
\end{center}
Тогда $s_i,u_i\in\Sp_{2n}(q)$, $s_i^{n_i}=I, u_i^2=-I, s_i^{u_i}=s_i, s_1s_2=s_2s_1, u_1u_2=-u_2u_1, s_1u_2=u_2s_1, s_2u_1=u_1s_2$, где $i=1,2$.
Если $n_1\neq n_2$, то $C_W(w)=\langle\omega_1,\tau_1,\omega_2,\tau_2\rangle\simeq (\mathbb{Z}_{n_1}\times\mathbb{Z}_2)\times(\mathbb{Z}_{n_2}\times\mathbb{Z}_2)$ и определим $H=\langle s_1, u_1,s_2, u_2\rangle$. Тогда  образ $\widetilde{H}$ группы $H$ в $\PSp_{2n}(q)$ будет дополнением для $\widetilde{T}$ в $N_{\widetilde{G}}(\widetilde{T})$. Если $n_1=n_2$, то $C_W(w)\simeq((\mathbb{Z}_{n_1}\times\mathbb{Z}_2)\times(\mathbb{Z}_{n_2}\times\mathbb{Z}_2))\rtimes\mathbb{Z}_2$. Пусть $\omega$~--- элемент, определенный в доказательстве леммы~\ref{lem8}, тогда $\omega\in\Sp_{2n}(q), \omega^2=I, s_1^{\omega}=s_2, u_1^{\omega}=u_2$. Положим $H=\langle s_1, u_1,s_2, u_2,\omega\rangle$, $\widetilde{H}$~--- образ $H$ в $\PSp_{2n}(q)$. Тогда $\widetilde{H}$~--- дополнение для $\widetilde{T}$ в $N_{\widetilde{G}}(\widetilde{T})$.

Перейдем к рассмотрению случая $q\equiv3\pmod4$. Пусть $\widetilde{H}$~--- дополнение для $\widetilde{T}$ в $N_{\widetilde{G}}(\widetilde{T})$, $H$~--- прообраз $\widetilde{H}$ в $N$. Покажем, что в этом случае $n_1,n_2$ обязаны быть четными. Пусть $s_1,u_1,s_2,u_2$~--- прообразы элементов $\omega_1,\tau_1,\omega_2,\tau_2$ в $H$. Тогда

\vspace{1em}
\hspace{1em}$s_1=\bd(D_1, D_2,D_1^{-1}, D_2^{-1})\omega_1$, $u_1=\bd(B_1, B_2,-B_1^{-1}, B_2^{-1})\tau_1$,

\hspace{1em}$s_2=\bd(D'_1, D'_2, (D'_1)^{-1}, (D'_2)^{-1})\omega_2$, $u_2=\bd(B'_1, B'_2,(B'_1)^{-1}, -(B'_2)^{-1})\tau_2.$

\vspace{1em}
\noindent где $D_1=\diag(\lambda_1, \lambda_1^q,\ldots, \lambda_1^{q^{n_1-1}})$, $D_2=\diag(\mu_1, \mu_1^q, \ldots, \mu_1^{q^{n_2-1}})$, $D'_1=\diag(\lambda_2, \lambda_2^q, \ldots, \lambda_2^{q^{n_1-1}})$, $D'_2=\diag(\mu_2, \mu_2^q, \ldots, \mu_2^{q^{n_2-1}})$, $B_1=\diag(\alpha_1, \alpha_1^q,\ldots, \alpha_1^{q^{n_1-1}})$, $B_2=\diag(\beta_1, \beta_1^q, \ldots, \beta_1^{q^{n_2-1}})$, $B'_1=\diag(\alpha_2, \alpha_2^q, \ldots, \alpha_2^{q^{n_1-1}})$, $B'_2=\diag(\beta_2, \beta_2^q, \ldots, \beta_2^{q^{n_2-1}})$.

Поскольку $\widetilde{H}$~--- дополнение для $\widetilde{T}$ в $N_{\widetilde{G}}(\widetilde{T})$, то, в частности, должны выполняться равенства $u_1^{2}=\varepsilon_1I, u_2^2=\varepsilon_2I$, $s_1u_2=\varepsilon_3u_2s_1$, $s_2u_1=\varepsilon_4u_1s_2$. По лемме~\ref{lem4} из равенств $u_1^{2}=\varepsilon_1I, u_2^2=\varepsilon_2I$ получаем, что $\beta_1^2=-1$, $\alpha_2^2=-1$. Далее, из соотношения $s_1u_2=\varepsilon_3u_2s_1$, в частности, следуют равенства
\[\left\{\begin{array}{rcl}
\alpha_2 & = & \varepsilon_3\alpha_2^{q^{n_1-1}} \\
\alpha_2^q & = & \varepsilon_3\alpha_2 \\
& \vdots &  \\
\alpha_2^{q^{n_1-1}} & = & \varepsilon_3\alpha_2^{q^{n_1-2}}
\end{array}
\right.\]
Если $\varepsilon_3=1$, то $\alpha_2^{q-1}=\varepsilon_3=1$ и при этом $\alpha_2^2=-1$, откуда $q\equiv1\pmod4$. Следовательно, $\varepsilon_3=-1$ и $\alpha_2=-\alpha_2^{q^{n_1-1}}=(-1)^2\alpha_2^{q^{n_1-2}}=\ldots=(-1)^{n_1}\alpha_2$, что возможно только при четном $n_1$. Аналогично, из равенства $s_2u_1=\varepsilon_4u_1s_2$ следует, что $\varepsilon_4=-1$ и $\beta_1=(-1)^{n_2}\beta_1$. Таким образом, числа $n_1, n_2$ должны быть четными.

Далее считаем, что $n_1, n_2$ четны и $q\equiv3\pmod4$. Пусть $\lambda\in\overline{\F}_p$, такой что $\lambda^2=-1$; $\xi_1, \xi_2$~--- первообразные корни из 1 степени $q^{n_1}-1, q^{n_2}-1$ соответственно, $\alpha_1=\xi_1^{-1}$, $\lambda_1=\xi_1^{\frac{q-1}{2}}$, $\beta_2=\xi_2^{-1}$, $\mu_2=\xi_2^{\frac{q-1}{2}}$. Тогда $\lambda^{q-1}=-1$, $\lambda_1^{(\frac{q^{n_1}-1}{q-1})}=~-1$, $\mu_2^{(\frac{q^{n_2}-1}{q-1})}=-1$, $\lambda_1^2\cdot\alpha_1^{q-1}=1$, $\mu_2^2\cdot\beta_2^{q-1}=1$. Определим следующие элементы

\begin{center}
$s_1=\bd(D_1, D_2,D_1^{-1}, D_2^{-1})\omega_1$, $s_2=\bd(D'_1, D'_2, (D'_1)^{-1}, (D'_2)^{-1})\omega_2$,
\end{center}

\noindent где  $D_2=\diag(\lambda, -\lambda, \ldots, \lambda, -\lambda)$, $D'_1=\diag(\lambda, -\lambda, \ldots, \lambda, -\lambda)$,

\[D_1=\begin{cases} I_1 & \text{если }n_1\equiv0\pmod4 \\
\diag(\lambda_1, \lambda_1^q, \ldots, \lambda_1^{q^{n_1-1}}) & \text{если }n_1\equiv2\pmod4
\end{cases},
\]
\[D'_2=\begin{cases} I_2 & \text{если }n_2\equiv0\pmod4 \\
\diag(\mu_2, \mu_2^q, \ldots, \mu_2^{q^{n_2-1}}) & \text{если }n_2\equiv2\pmod4.
\end{cases}\]

Матрицы $D_1$ и $D'_2$ выбраны таким образом, чтобы $s_1^{n_1}$ и $s_2^{n_2}$ получились скалярными. Действительно, если $n_1\equiv0\pmod4$, то $\lambda^{n_1}=1$ и по лемме~\ref{lem4} получаем $s_1^{n_1}=I$, а если $n_1\equiv2\pmod4$, то $\lambda_1\lambda_1^q\ldots\lambda_1^{q^{n_1-1}}=\lambda_1^{(\frac{q^{n_1}-1}{q-1})}=-1$, $\lambda^{n_1}=-1$ и по лемме~\ref{lem4} получаем $s_1^{n_1}=-I$. Аналогично, $s_2^{n_2}=I$ при $n_2\equiv0\pmod4$ и $s_2^{n_2}=-I$ при $n_2\equiv2\pmod4$. Определим элементы

\begin{center}
$u_1=\bd(B_1, B_2,-B_1^{-1}, B_2^{-1})\tau_1$, $u_2=\bd(B'_1, B'_2, (B'_1)^{-1}, -(B'_2)^{-1})\tau_2$,
\end{center}

\noindent где  $B_2=\diag(\lambda, -\lambda, \ldots, \lambda, -\lambda)$, $B'_1=\diag(\lambda, -\lambda, \ldots, \lambda, -\lambda)$,

\[B_1=\begin{cases} I_1 & \text{если }n_1\equiv0\pmod4 \\
\diag(\alpha_1, \alpha_1^q, \ldots, \alpha_1^{q^{n_1-1}}) & \text{если }n_1\equiv2\pmod4
\end{cases},
\]
\[B'_2=\begin{cases} I_2 & \text{если }n_2\equiv0\pmod4 \\
\diag(\beta_2, \beta_2^q, \ldots, \beta_2^{q^{n_2-1}}) & \text{если }n_2\equiv2\pmod4.
\end{cases}\]

Матрицы $B_1$ и $B'_2$ выбраны таким образом, чтобы $s_1u_1=u_1s_1$ и $s_2u_2=u_2s_2$. Действительно, пусть $D=\bd(D_1,(D_1)^{-1})$, $B=\bd(B_1,-(B_1)^{-1})$. Тогда равенство $s_1u_1=u_1s_1$ равносильно $D\omega_1B\tau_1=B\tau_1D\omega_1$, откуда $DB^{\omega_1^{-1}}=BD^{\tau_1}$. В случае $n_1\equiv0\pmod4$ последнее равенство тривиально. В случае $n_1\equiv2\pmod4$  равенство $DB^{\omega_1^{-1}}=BD^{\tau_1}$ равносильно условию $\lambda_1\alpha_1^q=\alpha_1\lambda_1^{-1}$, которое выполнено в силу выбора элементов $\lambda_1$ и $\alpha_1$. Аналогично, в случае $n_2\equiv2\pmod4$ равенство $s_2u_2=u_2s_2$ равносильно условию $\mu_2\beta_2^q=\beta_2\mu_2^{-1}$, которое выполняется. Далее, непосредственно проверяются следующие равенства:
\begin{center}
$u_1^2=u_2^2=-I, s_1u_2=-u_2s_1, s_2u_1=-u_1s_2, u_1u_2=-u_2u_1, s_1s_2=-s_2s_1$.
\end{center}
Если $n_1\neq n_2$, то $C_W(w)\simeq(\mathbb{Z}_{n_1}\times\mathbb{Z}_2)\times(\mathbb{Z}_{n_2}\times\mathbb{Z}_2)$ и определим $H=\langle s_1,u_1,s_2,u_2\rangle$. Пусть $\widetilde{H}$~--- образ группы $H$ в $\PSp_{2n}(q)$, тогда $\widetilde{H}$~--- дополнение для $\widetilde{T}$ в $N_{\widetilde{G}}(\widetilde{T})$. Если $n_1=n_2$, то $W\simeq((\mathbb{Z}_{n_1}\times\mathbb{Z}_2)\times(\mathbb{Z}_{n_2}\times\mathbb{Z}_2))\rtimes\mathbb{Z}_2$. Пусть $\omega$~--- элемент, определенный в доказательстве леммы~\ref{lem8}, тогда $\omega\in\Sp_{2n}(q), \omega^2=I, t_1^{\omega}=t_2$. Положим $H=\langle s_1, u_1,s_2, u_2,\omega\rangle$, $\widetilde{H}$~--- образ группы $H$ в $\PSp_{2n}(q)$. Тогда $\widetilde{H}$~--- дополнение для $\widetilde{T}$ в $N_{\widetilde{G}}(\widetilde{T})$.
\end{proof}
\end{lem}

\begin{lem}\label{lem10}
Пусть $T$~--- максимальный тор в группе $G=\Sp_{2n}(q)$, имеющий тип $(\overline{n_1})(n_2)$ и $\widetilde{T}$~--- образ тора $T$ в $\widetilde{G}=\PSp_{2n}(q)$. Тогда $\widetilde{T}$ имеет дополнение в $N_{\widetilde{G}}(\widetilde{T})$ в том и только в том случае, если $n_1$~--- нечетно, $n_2$~--- четно и $q\equiv3\pmod4$.
\begin{proof}
Сначала докажем необходимость. Пусть $\widetilde{H}$~--- дополнение для $\widetilde{T}$ в $N_{\widetilde{G}}(\widetilde{T})$, $H$~--- прообраз $\widetilde{H}$ в $N$. Пусть $t_1, u_2$~--- прообразы элементов $\varpi_1,\tau_2$ в $H$. Тогда

\begin{center}
$t_1=\bd(D_1, D_2,D_1^{-1}C_1, D_2^{-1})\varpi_1$, $u_2=\bd(D'_1, D'_2, (D'_1)^{-1}, -(D'_2)^{-1})\tau_2$,
\end{center}

\noindent где $D_2=\diag(\mu_1, \mu_1^q, \ldots, \mu_1^{q^{n_2-1}})$, $\mu_1^{q^{n_2}-1}=1$, $D'_1=\diag(\lambda_2, \lambda_2^q, \ldots, \lambda_2^{q^{n_1-1}})$, $\lambda_2^{q^{n_1}+1}=1$.
Поскольку $\widetilde{H}$~--- дополнение для $\widetilde{T}$ в $N_{\widetilde{G}}(\widetilde{T})$, то $t_1u_2=\varepsilon u_2t_1$, $(t_1)^{2n_1}=\varepsilon_1I, u_2^2=\varepsilon_2I$. Тогда из равенств $(t_1)^{2n_1}=\varepsilon_1I$ и $t_2^2=\varepsilon_2I$ в силу леммы~\ref{lem4} имеем $\mu_1^{2n_1}=-1$ и $\lambda_2^2=-1$ соответственно. Случай $t_1u_2= u_2t_1$ невозможен в силу пункта (1) леммы~\ref{lem2.5}, поэтому применяя пункт (2) леммы~\ref{lem2.5} получаем, что $n_1$~--- нечетно. Более того, если $n_1>1$, то по пункту (2) леммы~\ref{lem2.5} имеем $\lambda_2^{q-1}=-1$, откуда $q\equiv3\pmod4$. Если $n_1=1$, то должно выполняться $\lambda_2^{q+1}=1$, откуда $q\equiv3\pmod4$. Таким образом, в любом случае $q\equiv3\pmod4$.

Пусть $s_2$~--- прообраз элемента $\omega_2$ в $H$. Тогда должно выполняться равенство $t_1s_2=\varepsilon_3 s_2t_1$, откуда, в частности, $D_2^{\omega_2}=\varepsilon_3D_2$. Следовательно,
\[\left\{\begin{array}{rcl}
\mu_1 & = & \varepsilon_3\mu_1^{q^{n_2-1}} \\
\mu_1^q & = & \varepsilon_3\mu_1 \\
& \vdots &  \\
\mu_1^{q^{n_2-1}} & = & \varepsilon_3\mu_1^{q^{n_2-2}}
\end{array}
\right.\]
Если $\varepsilon_3=1$, то $\mu_1^{q-1}=\varepsilon_3=1$ и при этом $\mu_1^2=-1$, откуда $q\equiv1\pmod4$, что невозможно. Следовательно, $\varepsilon_3=-1$ и $\mu_1=-\mu_1^{q^{n_2-1}}=(-1)^2\mu_1^{q^{n_2-2}}=\ldots=(-1)^{n_2}\mu_1$, откуда следует, что $n_2$ обязано быть четным. Необходимость доказана, покажем достаточность.

Далее считаем, что $n_1$ нечетно, $n_2$ четно и $q\equiv3\pmod4$. Пусть $\lambda\in\overline{\F}_p$, такой что $\lambda^2=-1$; $\xi_2$~--- первообразный корень из 1 степени $q^{n_2}-1$, $\beta_2=\xi_2^{-1}$, $\mu_2=\xi_2^{\frac{q-1}{2}}$. Тогда $\lambda^{q-1}=-1$, $\mu_2^{(\frac{q^{n_2}-1}{q-1})}=-1$, $\mu_2^2\cdot\beta_2^{q-1}=1$. Определим следующие элементы

\begin{center}
$t_1=\bd(I_1, D_2,C_1, D_2^{-1})\varpi_1$,\quad $s_2=\bd(D'_1, D'_2, (D'_1)^{-1}, (D'_2)^{-1})\omega_2$,
\end{center}

\noindent где  $D_2=\diag(\lambda, -\lambda, \ldots, \lambda, -\lambda)$, $D'_1=\diag(\lambda,-\lambda,\ldots,\lambda,-\lambda,\lambda)$,

\[D'_2=\begin{cases} I_2 & \text{если }n_2\equiv0\pmod4 \\
\diag(\mu_2, \mu_2^q, \ldots, \mu_2^{q^{n_2-1}}) & \text{если }n_2\equiv2\pmod4.
\end{cases}\]
Матрица $D'_2$ выбрана таким образом, чтобы $s_2^{n_2}$ получилась скалярной. Действительно, если $n_2\equiv0\pmod4$, то $\lambda^{n_2}=1$ и по лемме~\ref{lem4} получаем $s_2^{n_2}=I$, а если $n_2\equiv2\pmod4$, то $\mu_2\mu_2^q\ldots\mu_2^{q^{n_2-1}}=\mu_2^{(\frac{q^{n_2}-1}{q-1})}=-1$, $\lambda^{n_1}=-1$ и по лемее~\ref{lem4} получаем $s_1^{n_1}=-I$. Наконец, определим элемент
\begin{center}
$u_2=\bd(B_1, B_2, (B_1)^{-1}, -(B_2)^{-1})\tau_2$,\quad где $B_1=\diag(\lambda,-\lambda,\ldots,\lambda,-\lambda,\lambda)$,
\end{center}
\[B_2=\begin{cases} I_2 & \text{если }n_2\equiv0\pmod4 \\
\diag(\beta_2,\beta_2^q, \ldots,\beta_2^{q^{n_1-1}}) & \text{если }n_2\equiv2\pmod4
\end{cases},
\]

Матрица $B_2$ выбрана таким образом, чтобы $s_2u_2=u_2s_2$. Действительно, пусть $D=\bd(D'_2,(D'_2)^{-1})$, $B=\bd(B_2,-(B_2)^{-1})$. Тогда равенство $s_2u_2=u_2s_2$ равносильно $D\omega_2B\tau_2=B\tau_2D\omega_2$, откуда $DB^{\omega_2^{-1}}=BD^{\tau_2}$. В случае $n_2\equiv0\pmod4$ последнее равенство тривиально. В случае $n_2\equiv2\pmod4$  равенство $DB^{\omega_2^{-1}}=BD^{\tau_2}$ равносильно условию $\mu_2\beta_2^q=\beta_2\mu_2^{-1}$, которое выполнено в силу выбора элементов $\mu_2$ и $\beta_2$. Далее, непосредственно проверяются следующие равенства:
\begin{center}
$t_1^{2n_1}=u_2^2=-I, t_1u_2=-u_2t_1, t_1s_2=-s_2t_1$.
\end{center}
 Поскольку $n_1\neq n_2$, то $C_W(w)\simeq\mathbb{Z}_{2n_1}\times(\mathbb{Z}_{n_2}\times\mathbb{Z}_2)$. Определим $H=\langle t_1,s_2,u_2\rangle$, $\widetilde{H}$~--- образ группы $H$ в $\PSp_{2n}(q)$. Тогда $\widetilde{H}$~--- дополнение для $\widetilde{T}$ в $N_{\widetilde{G}}(\widetilde{T})$.
\end{proof}
\end{lem}

\noindent В леммах~\ref{lem5}--\ref{lem10} и замечании~\ref{rem01} рассмотрены все типы максимальных торов в группе $\PSp_{2n}(q)$, откуда следует теорема~\ref{th2}. Следствие~\ref{cor2.5} следует из замечания~\ref{rem01} и пункта (1) леммы~\ref{lem5}.\\

В заключение автор выражает благодарность А.А. Бутурлакину и Е.П. Вдовину за обсуждение работы и ценные замечания.

\newpage

\noindent\textit{Гальт Алексей Альбертович}

\noindent\textit{Математическтй факультет,}

\noindent\textit{Университет науки и технологий Китая,}

\noindent\textit{Хэфэй 230026, Китай}

\noindent\verb"galt84@gmail.com"

\end{document}